% SIAM Article Template
\documentclass[final,hidelinks,onefignum,onetabnum]{siamart220329}
% Information that is shared between the article and the supplement
% (title and author information, macros, packages, etc.) goes into
% ex_shared.tex. If there is no supplement, this file can be included
% directly.

\usepackage{multicol}
\usepackage{cite}
\usepackage{multirow}
\usepackage{makecell}
\usepackage{pgfplots}
\pgfplotsset{compat=1.17}
\usepackage{subfig}

% SIAM Shared Information Template
% This is information that is shared between the main document and any
% supplement. If no supplement is required, then this information can
% be included directly in the main document.

% Packages and macros go here
\usepackage{lipsum}
\usepackage{amsfonts}
\usepackage{graphicx}
\usepackage{epstopdf}
\usepackage{algorithmic}
\ifpdf
  \DeclareGraphicsExtensions{.eps,.pdf,.png,.jpg}
\else
  \DeclareGraphicsExtensions{.eps}
\fi

% Prevent itemized lists from running into the left margin inside theorems and proofs
\usepackage{enumitem}
\setlist[enumerate]{leftmargin=.5in}
\setlist[itemize]{leftmargin=.5in}
\setlength{\emergencystretch}{3em}

% Add a serial/Oxford comma by default.

% Used for creating new theorem and remark environments
\newsiamremark{remark}{Remark}
\newsiamremark{hypothesis}{Hypothesis}
\crefname{hypothesis}{Hypothesis}{Hypotheses}
\newsiamthm{claim}{Claim}

% Sets running headers as well as PDF title and authors
%\headers{pISTA: preconditioned Iterative Soft Thresholding for Graphical Lasso}{G. Shalom, E. Treister, and I. Yavneh}
\headers{preconditioned ISTA for Graphical LASSO}{G. Shalom, E. Treister, and I. Yavneh}

% Title. If the supplement option is on, then "Supplementary Material"
% is automatically inserted before the title.
\title{pISTA: preconditioned Iterative Soft Thresholding Algorithm for Graphical Lasso}
%\thanks{Submitted to the editors DATE.
%\funding{This work was funded by the Fog Research Institute under contract no.~FRI-454.}}}

% Authors: full names plus addresses.
\author{Gal Shalom\thanks{Faculty of Computer Science, Technion — Israel Institute of Technology, Israel
  (\email{galshalom@cs.technion.ac.il}, \email{irad@cs.technion.ac.il}). Supported in part by the Israel Science Foundation, grant No. 1639/19.}
\and Eran Treister\thanks{Department of Computer Science, Ben-Gurion University of the Negev, Israel 
  (\email{erant@cs.bgu.ac.il}). This work was supported in part by the Israeli Council for Higher Education (CHE) via the Data Science Research Center, Ben-Gurion
University of the Negev, Israel.}
\and Irad Yavneh\footnotemark[1]}

\usepackage{amsopn}

\DeclareMathOperator*{\argmin}{arg\,min}
\DeclareMathOperator*{\argmax}{arg\,max}

\newcommand{\llangle}{\left\langle}
\newcommand{\rrangle}{\right\rangle}

%%% Local Variables: 
%%% mode:latex
%%% TeX-master: "ex_article"
%%% End: 

%\setlength{\emergencystretch}{3em}
%\usepackage[ruled,vlined]{algorithm2e}

% Optional PDF information
\ifpdf
\hypersetup{
  pdftitle={pISTA: preconditioned Iterative Soft Thresholding Algorithm for Graphical Lasso},
  pdfauthor={G. Shalom, E. Treister, and I. Yavneh}
}
\fi

% The next statement enables references to information in the
% supplement. See the xr-hyperref package for details.
%\externaldocument[][nocite]{ex_supplement}
%\myexternaldocument{ex_supplement}

% FundRef data to be entered by SIAM
%<funding-group specific-use="FundRef">
%<award-group>
%<funding-source>
%<named-content content-type="funder-name"> 
%</named-content> 
%<named-content content-type="funder-identifier"> 
%</named-content>
%</funding-source>
%<award-id> </award-id>
%</award-group>
%</funding-group>

\usepackage{ulem}
\normalem

\begin{document}

\maketitle

% REQUIRED
\begin{abstract}
    We propose a novel method for solving the sparse inverse covariance estimation problem, also known as the graphical least absolute shrinkage and selection operator (GLASSO). This problem is often solved using a second-order quadratic approximation. However, in such algorithms, the Hessian term is complex and computationally expensive to handle. Therefore, our method uses the inverse of the Hessian as a preconditioner to simplify and approximate the quadratic element at the cost of a more complex \(\ell_1\) element. The variables of the resulting preconditioned problem are coupled only by the \(\ell_1\) sub-derivative of each other, which can be guessed with minimal cost using the gradient itself, allowing the algorithm to be parallelized and implemented efficiently on GPU hardware accelerators. Numerical results on synthetic and real data demonstrate that our method is competitive with other state-of-the-art approaches. 
\end{abstract}

% REQUIRED
\begin{keywords}
  Graphical LASSO, Sparse precision matrix estimation, Proximal methods, Preconditioning. 
\end{keywords}

% REQUIRED
\begin{AMS}
  90C25, 65D18, 65K10, 65F08
\end{AMS}

\section{Introduction}
\label{sec:intro}

Inverse covariance estimation is a fundamental problem in modern statistics. Specifically, the inverse covariance matrices of multivariate normal distributions are used in numerous applications. One of the most common uses of the inverse covariance, also known as the \textit{precision matrix}, is to describe statistical models. That is, the graph structure of Gaussian graphical models can be inferred from the precision matrix \cite{intro:graph_model}. This inferred graph can be used for analyzing gene networks \cite{intro:bio}, financial assets and stocks dependencies \cite{intro:stocks,intro:stocks2}, social networks \cite{intro:social} and other inter-dependent data \cite{intro:other1,intro:other2}.

The most straightforward approach to estimating the precision matrix would be to use the inverse of the empirical covariance matrix. Evidently, inverting the empirical covariance matrix requires a number of samples that equals or exceeds the dimension of the matrix. However,  many problems of interest are high dimensional \cite{cov:BIG_QUIC,intro:bio}, to the extent that they are significantly larger than the number of available samples, so the empirical covariance matrix is rank-deficient. In such cases, some kind of regularization is essential. One of the most common ways to estimate the precision matrix is to solve a \(\ell_1\)-regularized maximum likelihood estimation problem, known as the graphical least absolute shrinkage and selection operator (Graphical LASSO, or GLASSO) problem. The use of \(\ell_1\)-regularization aims to achieve a sparse estimation while keeping the problem convex. The sparsity of the precision matrix may be interpreted as simplicity---a sparse precision matrix implies a simple inferred graph structure \cite{intro:sparse_model}. Thus, when a genuine empirical precision matrix cannot be computed, a sparse valid estimation is often the preferred choice \cite{intro:sparse_model, intro:sparse_assume,prob:cov_select}. We refer the reader to the first chapter of \cite{intro:sparse_model} and to \cite{prob:cov_select} for additional motivation. 

As the problem is well studied, many algorithms and methods for its solution have been presented over the years \cite{Sparse1,Sparse2,Sparse3,cov:GLASSO,cov:QUIC}. Some of these methods are computationally intensive or difficult to parallelize, rendering parallel computation accelerators such as GPUs ineffective. As far as we know, none of the algorithms in the literature were designed specifically with an efficient GPU deployment in mind. Nevertheless, various existing algorithms are able to take advantage of GPU processing power with a simple implementation, e.g., G-ISTA \cite{cov:GISTA}, VSM \cite{cov:VSM}, PSM \cite{cov:PSM}, ALM \cite{cov:ALM}, Newton-Lasso \cite{cov:newton_like} and Orthant-Based Newton \cite{cov:newton_like}. Each of these has its strengths and weaknesses. Other algorithms, such as BCD-IC \cite{cov:BCD}, can be deployed on a GPU as well. However, the method and the authors' implementation are complex, and the extensive usage of scalar operations may hinder its performance.

In this work we introduce a preconditioned Iterative Soft Thresholding Algorithm (pISTA) for solving the graphical LASSO problem. pISTA is an algorithm designed to be highly parallel and suitable for using GPU capabilities to its benefit by exploiting the problem structure effectively. A traditional second-order quadratic approximation includes a complex Hessian term which is computationally expensive to handle. Moreover, all the elements of the Hessian term are coupled, making efficient GPU deployment challenging. To this end, pISTA uses the inverse of the Hessian as a preconditioner to simplify and approximate the quadratic element at the cost of a more complex \(\ell_1\) element. This is highly beneficial since the Hessian inverse is easily obtained for the GLASSO problem. 
The variables of the resulting preconditioned problem are coupled only by the \(\ell_1\) sub-derivative of each other, which can be guessed with minimal cost using the gradient itself. The resulting pISTA algorithm uses only matrix operations, making it easier to exploit the GPU efficiently using current and future frameworks. On the other hand, we do not exploit the sparsity of the estimated matrix for speedup as GPUs are limited in their ability to parallelize sparse computations efficiently. They are also limited in memory. Hence we target the estimation of $n\times n$ matrices where $n$ is less than about ten thousand. Such problem sizes are relevant for many applications, like stocks, genes, brain regions, road maps, health-related measurements, etc. 

This paper is organized as follows. In \cref{sec:related} we refer to related work, and in \cref{sec:background} we formulate the problem. The pISTA algorithm is introduced in \cref{sec:pista}, and its convergence is proved in \cref{sec:analysis}. Experimental results are presented in \cref{sec:numer}, and conclusions follow in \cref{sec:conclusions}.

\section{Related Work}
\label{sec:related}

Many numerical algorithms have been developed which are specifically designed for solving the sparse inverse covariance estimation problem. G-ISTA \cite{cov:GISTA} uses a proximal gradient descent method that estimates the inverse covariance iteratively. GLASSO \cite{cov:GLASSO} splits the problem into smaller LASSO problems and updates the estimation by solving them separately. QUIC \cite{cov:QUIC} uses a proximal Newton method on the objective function. BCD-IC \cite{cov:BCD} uses a block coordinate descent method. ALM \cite{cov:ALM} uses an alternating linearization technique by splitting the objective function into two linear approximations and minimizing them alternatingly. PSM \cite{cov:PSM} employs a projected gradient method. VSM \cite{cov:VSM} uses Nesterov’s smooth
optimization technique. Newton-Lasso \cite{cov:newton_like} and Orthant-Based Newton \cite{cov:newton_like} use FISTA \cite{FISTA} and Conjugate Gradient methods, respectively, to solve reduced second-order approximations. Although some algorithms are designed to be run in parallel using the multi-core model of the CPU, none of them take into specific consideration the parallelism model of the GPU and the usage of GPUs as a computation accelerator.

Additionally, some algorithms were developed specifically for large-scale matrices where the matrix can only reside in memory in a sparse format. BIG\&QUIC and SQUIC \cite{cov:BIG_QUIC,bollhofer2019large} extend QUIC \cite{cov:QUIC} to large scales. BIG\&QUIC uses on-demand computation of the Hessian's columns and a special procedure of the linesearch conditions using Schur complements. SQUIC utilizes sparse Cholesky factors of the iterations matrix. ML-BCD \cite{cov:MultiBCD} takes BCD-IC \cite{cov:BCD}, which is already suitable to large-scale problems, and accelerates it using a generic multilevel framework, which was originally suggested in \cite{treister2012multilevel} for LASSO. This acceleration can, in principle, be used with our proposed method as well. Large-scale algorithms may benefit from the GPU processing power, however, memory constraints and usage of sparse format make this task non-trivial.

Graphical LASSO can also be used in a mixture model setup when multiple sparse inverse covariances are utilized \cite{finder2020effective}. This allows a richer statistical model but requires the estimation of multiple sparse Precision Matrices. An efficient GPU-based solver is highly beneficial in such scenarios as well.

\section{Background}
\label{sec:background}

\subsection{Sparse Inverse Covariance Estimation}
Estimating the parameters of multivariate Gaussian distributions is a fundamental problem in statistics. Given \(m\) independent samples \(\{y_i\}_{i=1}^m\in\mathbb{R}^n\), where \(y_i\sim \mathcal{N}(\mu,\Sigma)\), one would like to estimate the mean \(\mu\in\mathbb{R}^n\) and either the covariance \(\Sigma\), or its inverse \(\Sigma^{-1}\), which is also called the \textit{precision matrix}. Both the mean \(\mu\) and the covariance \(\Sigma\) are often estimated by the maximum likelihood estimator (MLE). The MLE is given by the parameters \(\mu\) and \(\Sigma\) that maximize the probability to sample the observed data \(\{y_i\}_{i=1}^m\):
\begin{equation}\label{eq:MLE_objective}
\begin{split}
    \hat{\mu},\hat{\Sigma} &= \argmax_{\mu,\Sigma}\prod^m_{i=1}\mathbb{P}(y_i|\Sigma,\mu)\\
    &= \argmax_{\mu,\Sigma}\prod^m_{i=1}\frac{1}{\sqrt{(2\pi)^m det(\Sigma)}}\text{exp}\left(-\frac{1}{2}(y_i-\mu)^T\Sigma^{-1}(y_i-\mu)\right)
    \,.
\end{split}
\end{equation}
MLE has an analytical solution:
\begin{equation}\label{eq:empirical_cov}
    \hat{\mu}=\frac{1}{m}\sum_{i=1}^m y_i, \quad\quad
    \hat{\Sigma}=S\overset{\Delta}{=}\frac{1}{m}\sum_{i=1}^m (y_i-\hat{\mu})(y_i-\hat{\mu})^T
    \,,
\end{equation}
where \(\hat{\mu}\) and \(\hat{\Sigma}\) are called the empirical mean and empirical covariance, respectively.

Usually, one considers the log-likelihood objective (negative \(log\) of \cref{eq:MLE_objective}), minimized over the inverse covariance matrix, yielding the inverse covariance MLE:
\begin{equation} \label{eq:log_MLE}
    \hat{\Sigma}^{-1} = \argmin_{A\succ0}f(A)\overset{\Delta}{=}-\log(\det(A))+Tr(SA)
    \,.
\end{equation}
The solution is indeed the inverse of \cref{eq:empirical_cov}. However, in cases where the number of available samples is smaller than the dimension of $y_i$ ($m<n$), the matrix \(S\) is rank-deficient and thus non-invertible, whereas the true \(\Sigma\) is assumed to have full rank and is positive definite. In other words, we cannot estimate \(\Sigma^{-1}\) by inverting \(S\), and further assumptions should be considered. A common choice in this case is to assume that \(\Sigma^{-1}\) is sparse. \(\Sigma^{-1}\) can be interpreted as a conditional dependence matrix, that is, its off-diagonal entries indicate the dependence between the row and column variables, given all the remaining variables \cite{rue2005gaussian}. 

A common approach is to regularize the log-likelihood objective \cref{eq:log_MLE} with a sparsity promoting \(\ell_1\)-penalty \cite{Sparse1,Sparse2,Sparse3}:
\begin{equation} \label{eq:sparse_obj}
    \hat{\Sigma}^{-1}=\argmin_{A\succ0}F(A)\overset{\Delta}{=} \argmin_{A\succ0}f(A)+\alpha||A||_1
    \,,
\end{equation}
where \(\alpha>0\) is a scalar and \(||A||_1=\sum_{i,j}|A_{i,j}|\). \Cref{eq:sparse_obj} is known as the \emph{Graphical Lasso} problem. 

As seen from \cref{eq:sparse_obj}, the objective function is convex and is composed of two parts---a smooth convex part \(f(A)\) and a non-smooth convex part \(\alpha||A||_1\). Although the objective is convex, the non-smooth term makes traditional algorithms ineffective, and more specialized solvers are needed.

\subsection{Proximal Methods for Sparse Inverse Covariance Estimation}

A common approach to solving convex problems comprised of smooth and non-smooth parts is known as \textit{proximal methods}, where we approximate the objective function at each iteration as follows: the smooth part is approximated by a quadratic function while the non-smooth part is kept unchanged. This approach is especially attractive when the non-smooth part is a separable function (e.g., point-wise). 

Specifically, applying proximal methods to the objective function \(F(A)\) of \cref{eq:sparse_obj}, the smooth term \(f(A)\) is approximated by a quadratic function while the term \(\alpha||A||_1\) remains unchanged. This approximation yields a linear LASSO \cite{LASSO} problem (\(\ell_1\)-regularized quadratic objective). Once defined, we minimize it approximately using a LASSO solver. Denoting  the descent direction at the \(k\)-th iteration by \(D^{(k)}\), then at each iteration we solve:
\begin{equation}\label{eq:proximal_gen}
    \begin{split}
        D^{(k)} &= \argmin_D \Tilde{F}(A^{(k)}+D)\\
        &= \argmin_D f(A^{(k)})+ \llangle g^{(k)},D\rrangle + \frac{1}{2}\Big\langle D,D\Big \rangle_{H^{(k)}}+\alpha||A^{(k)}+D||_1 
        \,,       
    \end{split}
\end{equation}
where \(g^{(k)}=\nabla f(A^{(k)})\) is the gradient of $f$ at the $k$-th iteration. The matrix \(H^{(k)}\) depends on the particular method, and its associated inner product is defined by 
\begin{equation}
\label{eq:HessTerm}
\Big\langle D,D \Big\rangle_{H^{(k)}} = \Big\langle \text{vec}(D),H^{(k)}\text{vec}(D)\Big\rangle, 
\end{equation}
where vec() denotes the column-stacking of a matrix to a vector. For 
\(H^{(k)}=I\) we get the proximal gradient descent method, which is called G-ISTA \cite{cov:GISTA}. On the other extreme, for 
\(H^{(k)}=\nabla^2 f(A^{(k)})\), the Hessian at the $k$-th iteration, we get the ``Proximal Newton'' method known as QUIC \cite{cov:QUIC}.

The gradient and the Hessian are given by (see Chapter A.4.3 at \cite{proof:derivation}):
\begin{equation}\label{eq:grad_hessian}
     \nabla f(A)=S-A^{-1} \quad,\quad \nabla^2 f(A)=A^{-1} \otimes A^{-1} 
     \,,
\end{equation}
where \(\otimes\) is the Kronecker product. We note that once the gradient has been computed, we can compute Hessian-vector products at a relatively low cost by using the property \((A\otimes B)\text{vec}(V)=\text{vec}(BVA^T)\). However, solving \cref{eq:proximal_gen} with the exact Hessian \(H^{(k)}=\nabla^2 f(A^{(k)})\) is complex and it is typically done by coordinate descent iterations \cite{cov:QUIC}. This makes it hard to use computation accelerators (such as GPUs) efficiently.

\section{The preconditioned Iterative Soft Thresholding Algorithm (pISTA)}
\label{sec:pista}
In this section we develop the \textit{preconditioned Iterative Soft Thresholding Algorithm} (pISTA). In \cite{cov:QUIC}, the Proximal Newton method for our problem, each coordinate computation depends on the values of all the other coordinates due to the Hessian in the quadratic element. In pISTA, we aim for a simpler and easier-to-solve quadratic element at the cost of a more complex non-smooth $\ell_1$ part. 

First, in \cref{section:free_set}, we restrict our problem to a smaller set of variables as done in other algorithms \cite{cov:QUIC,cov:MultiBCD,cov:newton_like}. In \cref{section:pre}, we simplify and relax the quadratic part using a preconditioner at the cost of making the non-smooth part \(\alpha||A||_1\) more complex. Lastly, in \cref{section:pista}, we solve the resulting problem, completing the development of pISTA. In contrast to second-order methods like QUIC, for example, in pISTA each coordinate depends on the other variables only through the sub-gradient of the non-smooth part (\(\ell_1\) norm), which is more complex. However, the sub-gradient of the \(\ell_1\) term can be approximated easily and relatively well by the sign of the elements of the current iterate (if it is nonzero) or of its gradient (where the elements of the iterate vanish), allowing our method to be efficient.

\subsection{Restricting the Updates to the Free-set} \label{section:free_set}
As introduced in \cite{cov:QUIC} and used in \cite{cov:newton_like,cov:MultiBCD}, we restrict the descent direction \(D\) to the \textit{free-set} of \(A^{(k)}\). That is, an element in \(D\) which is not in the \textit{free-set} of  \(A^{(k)}\) is set to zero. Denote by \(\mathcal{S}_A^{(k)}\) the \textit{free-set} at iteration \(k\), defined as follows:
\begin{gather}
    \label{eq:free_set}
    \mathcal{S}_A^{(k)} = \bigg\{(i,j) \big|\ A^{(k)}_{i,j}\neq 0\bigg\}\cup\bigg\{(i,j) \big|\  |\nabla f(A^{(k)}|_{i,j}>\alpha\bigg\}
\,.\end{gather}
According to Lemma 7 of \cite{cov:QUIC}, solving \cref{eq:proximal_gen} restricted to the variables that are not in the \textit{free-set} of \(A^{(k)}\) will result in a zero value in every element in the direction \(D\). Thus, restricting \(D\) to the \textit{free-set} is equivalent to solving \cref{eq:proximal_gen} in an alternating two-block manner: the first step restricted to the variables not in the \textit{free-set} (which makes no change in our approximation), and the second step restricted to the \textit{free-set}. In the works mentioned above, the free set is used to save computations. On the other hand, here we precondition the problem and couple all the unknowns in the inner quadratic problem defined using the Taylor expansion. Hence, the free set is important to maintain the correct subgradient in the linear Taylor (gradient) term, otherwise, the quadratic approximation is inconsistent with the $\ell_1$ term and the eventual update. As a result, the restriction to the free set is an integral part of our method, and this set is recomputed before each iteration. 

Define the restriction mask at the \(k\)-th iteration:
\begin{equation} \label{M}
    [\mathcal{M}_A^{(k)}]_{i,j} = \bigg\{\begin{array}{lr}
         1&  (i,j)\in\mathcal{S}_A^{(k)}\\
         0 & (i,j)\notin\mathcal{S}_A^{(k)}
    \end{array} 
\,.\end{equation}
Then, the restricted problem we solve at the $k$th iteration can be written as:
\begin{equation} \label{eq:proximal_gen_free}
    \begin{split}
        D^{(k)} &= \argmin_D \Tilde{F}(A^{(k)}+(\mathcal{M}_{A}^{(k)} \odot D))
    \end{split}
\,,\end{equation}
where \(\odot\) is the Hadamard product. Note that, in the solution of \cref{eq:proximal_gen_free}, the elements of \(D^{(k)}\) that are not in the \textit{free-set} of \(A^{(k)}\) may have nonzero values. However, we restrict the updates to the indices in the \textit{free-set} of \(A^{(k)}\) only. Therefore, our descent direction is \(\mathcal{M}_A^{(k)}\odot D^{(k)}\), which is equivalent to setting the values of \(D^{(k)}\) that are not in the free-set to zero.

\subsection{Preconditioning of the Descent Direction} \label{section:pre}
Note the following property of the Hessian in \cref{eq:grad_hessian}:
\begin{equation}
    \left(\nabla^2f(A)\right)^{-1} = A \otimes A
\,,\end{equation}
where \(\otimes\) is the Kronecker product. This means that the Hessian inverse can be obtained without any computational overhead, using $A$ instead of $A^{-1}$. Here we are interested in using this appealing property to accelerate the solution of \cref{eq:proximal_gen}, where \(H^{(k)}=\nabla^2 f(A^{(k)})\), using the Hessian as a preconditioner. To this end, we define a new variable \(\Delta\) and define $D$ by:
\begin{equation} \label{eq:D_hess}
    \text{vec}(D) = \big(\nabla^2f(A^{(k)})\big)^{-1}\text{vec}(\Delta)\Rightarrow D = A^{(k)}\Delta A^{(k)}
    \,,
\end{equation}
where \(\text{vec}()\) denotes the column-stacking of a matrix into a vector. We note that because \(A^{(k)}\) is positive definite, it is of full rank, and the fact that $\Delta$ is not constrained implies that $D$ is not restricted to a specific subspace.

Denote \(W^{(k)}=(A^{(k)})^{-1}\). Using the preconditioned descent direction \cref{eq:D_hess} in the restricted problem \cref{eq:proximal_gen_free}, with \(H^{(k)}=\nabla^2f(A^{(k)}) = W^{(k)}\otimes W^{(k)}\), results in the following equation:
\begin{equation}\label{eq:pre_newton}
    \begin{split}
        \Delta^{(k)} &= \argmin_\Delta \Tilde{F}\left(A^{(k)}+\mathcal{M}_A^{(k)} \odot (A^{(k)}\Delta A^{(k)})\right)\\
        &= \argmin_\Delta f(A^{(k)}) + \llangle g^{(k)},\mathcal{M}_A^{(k)} \odot (A^{(k)}\Delta A^{(k)})\rrangle\\
        &\quad\quad+ \frac{1}{2}\llangle\mathcal{M}_A^{(k)} \odot (A^{(k)}\Delta A^{(k)}),W^{(k)}\left(\mathcal{M}_A^{(k)} \odot (A^{(k)}\Delta A^{(k)})\right)W^{(k)}\rrangle\\
        &\quad\quad+ \alpha\left|\left|A^{(k)}+\mathcal{M}_A^{(k)} \odot (A^{(k)}\Delta A^{(k)})\right|\right|_1        
    \,.
    \end{split}
\end{equation}

After applying the mask restriction \(\mathcal{M}\) and the preconditioning to \cref{eq:proximal_gen}, the quadratic part (the second element in \cref{eq:pre_newton}) is still complex. We relax it by removing the mask restriction \(\mathcal{M}\) and replacing it with a multiplicative scalar:
\begin{multline}\label{eq:relax}
    \frac{1}{2}\llangle\mathcal{M}_A^{(k)} \odot (A^{(k)}\Delta A^{(k)}),W^{(k)}\left(\mathcal{M}_A^{(k)} \odot (A^{(k)}\Delta A^{(k)})\right)W^{(k)}\rrangle\\
    \approx\frac{1}{2t}\llangle A^{(k)}\Delta A^{(k)},W^{(k)}(A^{(k)}\Delta A^{(k)})W^{(k)}\rrangle=\frac{1}{2t}\llangle A^{(k)}\Delta A^{(k)},\Delta \rrangle
\,,\end{multline}
where \(t\) is a scalar that is computed using linesearch. Since we do not change the first-order terms (the gradient and \(\ell_1\)), this can still result in a monotonically convergent method. Denote the resulting approximation by \(P(\Delta;t)\), then the \(k\)-th iterate is given by:
\begin{equation}\label{eq:eq_quasi_newton}
    \begin{split}
        \Delta^{(k)} &= \argmin_\Delta P(\Delta;t)\\
        &= \argmin_\Delta f(A^{(k)}) + \llangle g^{(k)},\mathcal{M}_A^{(k)} \odot (A^{(k)}\Delta A^{(k)})\rrangle+ \frac{1}{2t}\llangle A^{(k)}\Delta A^{(k)},\Delta \rrangle\\
        &\quad\quad+ \alpha\left|\left|A^{(k)}+\mathcal{M}_A^{(k)} \odot (A^{(k)}\Delta A^{(k)})\right|\right|_1        
\,.
\end{split}
\end{equation}

\subsection{The pISTA Method} \label{section:pista}
Our pISTA algorithm solves the problem defined by \cref{eq:eq_quasi_newton} in each iteration, with an appropriately chosen \(t\). First, we find a solution \(\Delta^{(k)}\) to \cref{eq:eq_quasi_newton}. To obtain this, we need to formulate the sub-differential of the quadratic approximation $P$. First, denote the sub-differential of the non-smooth \(\ell_1\) term by:
\begin{eqnarray} \label{eq:l1_grad}
    G^{(k)}(D) &\overset{\Delta}{=}&  \frac{\partial ||A^{(k)}+D||_1}{\partial D} \\ \nonumber &=& \Bigg\{
   \mathcal{G}^{(k)} \Bigg|\begin{array}{lr}
         \mathcal{G}^{(k)}_{i,j}=[sign(A^{(k)}+D)]_{i,j}&  [A^{(k)}+D]_{i,j}\neq 0\\
         \mathcal{G}^{(k)}_{i,j}\in[-1,1]& [A^{(k)}+D]_{i,j}=0
\end{array} \Bigg\}.
\end{eqnarray}
Next, using the chain rule and some known derivatives with \eqref{eq:l1_grad}, we formulate the sub-differential of $P$:
\begingroup\small
\begin{multline} \label{eq:quasi_grad}
    \frac{\partial P(\Delta;t)}{\partial \Delta} = \Bigg\{
    \frac{1}{t} A^{(k)}\Delta A^{(k)} +
    A^{(k)}\left(g^{(k)} \odot \mathcal{M}_A^{(k)}\right)A^{(k)}+\alpha A^{(k)}\left(\mathcal{G}^{(k)}\odot \mathcal{M}_A^{(k)}\right)A^{(k)} \\\Bigg|\ \mathcal{G}^{(k)}\in G^{(k)}\left(\mathcal{M}_A^{(k)} \odot (A^{(k)}\Delta A^{(k)})\right) \Bigg\}
\,,\end{multline}\endgroup
where \(\mathcal{G}^{(k)}\) represents a sub-gradient of the \(\ell_1\) term. 

%Denote by \(\mathcal{G}\left(A^{(k)},\mathcal{M}_A^{(k)} \odot (A^{(k)}\Delta A^{(k)})\right)\) the \(\ell_1\) sub-differential, than we can write:
%\begin{multline} 
%    \frac{\partial P(\Delta;t)}{\partial \Delta} = \Bigg\{
%    \frac{1}{t} A^{(k)}\Delta A^{(k)} +
%    A^{(k)}\left(g^{(k)} \odot \mathcal{M}_A^{(k)}\right)A^{(k)}\\+\alpha A^{(k)}\left(\mathcal{G}^{(k)}\odot \mathcal{M}_A^{(k)}\right)A^{(k)} \Bigg| G \in \mathcal{G}\left(A^{(k)},\mathcal{M}_A^{(k)} \odot (A^{(k)}\Delta A^{(k)})\right)\Bigg\}
%.\end{multline}

The desired \(\Delta^{(k)}\) that will be used to compute the descent direction is one which includes the matrix 0 in its sub-gradients: 
\begin{equation} \label{eq:delta_grad}
    \Delta^{(k)}:\quad\quad 0\in\frac{\partial P(\Delta^{(k)};t)}{\partial \Delta}
\,.\end{equation}

Notice that after finding \(\Delta^{(k)}\), it will be used to compute the descent direction \(\mathcal{M}_A^{(k)}\odot D^{(k)}=\mathcal{M}_A^{(k)}\odot(A^{(k)}\Delta^{(k)}A^{(k)})\).  Thus, we can equivalently find \(D^{(k)}=A^{(k)}\Delta^{(k)}A^{(k)}\) instead of \(\Delta^{(k)}\). In other words, we shall find \(D^{(k)}\) which has 0 as one of its sub-gradients:
\begin{multline} \label{eq:d_grad}
    D^{(k)}: 0\in\Bigg\{
    \frac{1}{t}D^{(k)} +
    A^{(k)}\left(g^{(k)} \odot \mathcal{M}_A^{(k)}\right)A^{(k)}+\alpha A^{(k)}\left(\mathcal{G}^{(k)}\odot \mathcal{M}_A^{(k)}\right)A^{(k)} \\\Bigg|\ \mathcal{G}^{(k)}\in G^{(k)}\left(\mathcal{M}_A^{(k)} \odot D^{(k)}\right) \Bigg\}
\,.\end{multline}
Note that the change of variables has made the first (Hessian) term trivial. To find \(D^{(k)}\), we split the sub-gradient to \(n^2\) equations where the \((i,j)\)-th equation is composed by the \((i,j)\) element in the sub-gradient. For each equation \((i,j)\), we define the equation variable as \(D^{(k)}_{i,j}\). The approximate solution (developed in \cref{appendix:d_sol}) is:
%\begin{equation}\label{eq:dd_sol}
%    D^{(k)}_{i,j}=\left\{\begin{array}{lc}
%         0, &  [\mathcal{M}_A^{(k)}]_{i,j} = 0\\
%         t\cdot(-B^{(k)}_{i,j}+C^{(k)}_{i,j}), &  [\mathcal{M}_A^{(k)}]_{i,j} \neq 0\quad AND\quad -A^{(k)}_{i,j}+t\cdot B^{(k)}_{i,j}>t\cdot C^{(k)}_{i,j}\\
%         -A^{(k)}_{i,j}, &  [\mathcal{M}_A^{(k)}]_{i,j} \neq 0\quad AND\quad |-A^{(k)}_{i,j}+t\cdot B^{(k)}_{i,j}|\leq t\cdot C^{(k)}_{i,j}\\
%         t\cdot(-B^{(k)}_{i,j}-C^{(k)}_{i,j}), &  [\mathcal{M}_A^{(k)}]_{i,j} \neq 0\quad AND\quad -A^{(k)}_{i,j}+t\cdot B^{(k)}_{i,j}\llangle-t\cdot C^{(k)}_{i,j}
%    \end{array}\right.
%\end{equation}
\begin{equation}\label{eq:d_sol}
    D^{(k)}_{i,j}=-A^{(k)}_{i,j}+\text{SoftThreshold}\left(A^{(k)} - t\cdot B^{(k)}_{i,j}\ ,\ t\cdot C^{(k)}_{i,j}\right)
    \,,
\end{equation}
where\begin{equation} \label{eq:c}
    C^{(k)}_{i,j}=\left\{\begin{array}{lc}
        \alpha\cdot\left(A^{(k)}_{i,i}\cdot A^{(k)}_{j,j}\right), & i= j\\
        \alpha\cdot\left(A^{(k)}_{i,i}\cdot A^{(k)}_{j,j}+A^{(k)}_{i,j}\cdot A^{(k)}_{j,i}\right), & i\neq j
    \end{array}\right.
    \,,
\end{equation}
and
\begin{equation} \label{eq:b}
    B^{(k)}=
    A^{(k)}\left(g^{(k)} \odot \mathcal{M}_A^{(k)}\right)A^{(k)} + \alpha A^{(k)}\left(\mathcal{G}^{(k)}\odot \mathcal{M}_A^{(k)}\right)A^{(k)}-C^{(k)}\odot \left(\mathcal{G}^{(k)}\odot \mathcal{M}_A^{(k)}\right)
    \,.
\end{equation}

So far, we developed a second-order approximation for the GLASSO using the Hessian inverse and a quadratic relaxation. We found a closed form direction \(D^{(k)}\) for each \((i,j)\) entry, assuming that the rest of the entries in \(\mathcal{G}^{(k)}\) are given. In the following subsections we show how we approximate the entries of \(\mathcal{G}^{(k)}\) and how to find an appropriate step size \(t\).  

%\sout{\Cref{alg:pista} describes the pISTA method using \cref{eq:d_sol}. One part that is still missing from the description so far is the computation of \(\mathcal{G}^{(k)}\) and the linesearch procedure for finding an appropriate \(t\), which we describe next. It is important to note that all the steps and operations in \cref{alg:pista} can be computed using matrix operations. This property allows us to easily benefit from GPU computation acceleration and easy GPU deployment.}

%\begin{algorithm}[h]
%\caption{\sout{pISTA(\(S,\alpha,A^{(0)}\))}}
%\label{alg:pista}
%\begin{algorithmic}[1]
%\STATE{\textbf{Result:} \(A\)}
%\STATE{\textbf{Initialization:} \(k=0\);}
%\WHILE{stop criteria not met}
%    \STATE{Compute \(\mathcal{S}_A^{(k)}\) according to \cref{eq:free_set};}
%    \STATE{Compute \(\mathcal{M}_A^{(k)}\) according to \cref{M};}
%    \STATE{Compute \(\mathcal{G}^{(k)}\) according to \cref{G+};}
%    \STATE{Find an appropriate \(t\) and compute \(D^{(K)}\) according to \cref{eq:d_sol};}
%    \STATE{\(A^{(k+1)}=A^{(K)}+\mathcal{M}_A^{(k)}\odot D^{(K)}\);}
%\ENDWHILE
%\end{algorithmic}
%\end{algorithm}

\subsubsection{The Approximation of \(\mathcal{G}\)}
The algorithm depends on a good approximation of \(\mathcal{G}^{(k)}\), especially at the beginning of the solution process. Recall that \(\mathcal{G}^{(k)}_{i,j}\) is the sub-derivative of \(|{A}^{(k)}_{i,j} + {D}^{(k)}_{i,j}|\), and although the sub-derivative cannot be computed, it can be approximated by \(sign({A}^{(k)}_{i,j} + {D}^{(k)}_{i,j})\). Eventually, at the late iterations \(\mathcal{G}^{(k)}\odot \mathcal{M}_A^{(k)}\) will converge to \(sign(A^{(k)})\), as all zero elements in \(A^{(k)}\) will be out of the free-set \(\mathcal{S}^{(k)}_A\). Therefore, their elements in \(\mathcal{G}^{(k)}\odot \mathcal{M}_A^{(k)}\) will be zero, and all non-zero elements in \(A^{(k)}\) will have a constant \(\mathcal{G}^{(k)}\), which is their sign. Moreover, at those later iterations, where \(sign(A^{(k)})\) is not expected to change at all, the above formula \cref{eq:d_sol} solves \cref{eq:d_grad} completely as all the terms except $D^{(k)}$ are constant. On the other hand, at the initial iterations \(\mathcal{G}^{(k)}\odot \mathcal{M}_A^{(k)}\) cannot be easily computed because some elements in the free set may change sign or be zero in the final solution. To this end, we approximate \(\mathcal{G}^{(k)}\) by predicting the ``next'' sign of the elements. That is, we take the sign of the non-zero entries, and if an entry is zero, then we take the opposite sign of its gradient. Thus, a good approximation of \(\mathcal{G}^{(k)}\) is:
\begin{equation} \label{G+}
    \mathcal{G}^{(k)}_{i,j}=\left\{\begin{array}{lr}
         sign(A^{(k)}_{i,j})& A^{(k)}_{i,j}\neq0 \\
         -sign(g^{(k)}_{i,j})&  A^{(k)}_{i,j}=0
    \end{array}\right.
\,,\end{equation}
where \(g^{(k)}=\nabla f(A^{(k)})\) as defined before. This means that we choose the next signs according to a small step of gradient descent. The above \(\mathcal{G}^{(k)}\) together with the free-set restriction mask \(\mathcal{M}^{(k)}\) define the minimum sub-gradient of \(F(A^{(k)})\) with respect to \(\ell_2\)-norm.

\subsubsection{Performing Linesearch to find \(t\)}
As proved later in \cref{sec:analysis}, for the algorithm to converge, it is sufficient for the parameter \(t\) in \eqref{eq:d_sol} to satisfy:
\begin{gather}
A^{(k)}+\mathcal{M}_A^{(k)}\odot D^{(k)} \succ 0\,, \label{t_cond_pd}\\
    F(A^{(k)}+\mathcal{M}_A^{(k)}\odot D^{(k)}) < F(A^{(k)}) \label{t_cond_min}
\,.\end{gather}
Therefore, at each iteration we perform linesearch over \(t\) until  \cref{t_cond_pd} and \cref{t_cond_min} are satisfied. As we prove later in \cref{sec:analysis_min}, a \(t\) which satisfies \(t<(\frac{\beta}{\gamma})^2\) is guaranteed to be appropriate, where \(\beta\) and \(\gamma\) are lower and upper bounds of the eigenvalues, respectively. This means that a backtracking linesearch scheme of dividing \(t\) by a constant number at each step is guaranteed to end within a finite number of steps.

\subsubsection{Summary of the pISTA Algorithm}
\Cref{alg:pista_impl} describes the full pISTA algorithm which we implemented. It is important to note that all the steps and operations in \cref{alg:pista_impl} can be computed using standard matrix subroutines, which makes our code easy to follow, flexible for future updates, and portable between computer systems. This also allows us to easily benefit from accelerated GPU computation and easy deployment using public math libraries with GPU backend. However, current GPU math libraries are highly optimized for dense operations, and the sparse kernels struggle to exploit the sparsity in the problem. That is because sparse operations involve additional memory accesses (for the indices) and irregular memory access patterns, creating computational overhead. As such, our implementation uses dense operations only and is suitable for small and medium-sized problems, e.g., up to hundreds or thousands of variables. In such problem sizes, the sparsity levels are typically not significant enough to benefit in computing time compared to dense operations. Large-sized problems, e.g. in the millions of variables like the ones considered in SQUIC \cite{bollhofer2019large}, BIG\&QUIC \cite{cov:BIG_QUIC} and BCD-IC \cite{cov:BCD}, must exploit the sparsity of the problems, which is crucial in that case, not only in terms of memory complexity but also in terms of run-time complexity.

Because we consider small to medium-sized problems in \cref{sec:numer}, we mostly compare pISTA to algorithms that use dense operations such as GISTA \cite{cov:GISTA}, Orthant-Based Newton (OBN) \cite{cov:newton_like} and ALM \cite{cov:ALM}, which are implemented on the GPU in the same manner. As stated above, the usage of dense operations for the computation of \(A^{-1}\) and \(det(A)\) in each iteration results in \(O(n^3)\) run time complexity of pISTA and the other methods for each iteration. We rely on the efficiency with which dense operations are applied but consider medium-sized problems, as the theoretical complexity is high. It should be noted, in this context, that such problems of medium size are common in many real-life scenarios.

The GPU implementation of pISTA relies on the internal parallelism of public math libraries and does not take into consideration multi-threading. However, our implementation avoids redundant copies of data from the system memory to the GPU memory and vice versa by applying as much of the computations on the GPU, carefully managing the relatively small GPU memory.

The CPU counterpart implementation follows the same high-level code as the GPU counterpart, only using the CPU backend of the standard public math libraries. Similarly to the GPU implementation, we rely on the internal parallelism of the public libraries and do not take additional multi-threading into consideration. For completeness, in \cref{sec:numer} we also compare our CPU version of pISTA to the available CPU implementations of SQUIC \cite{bollhofer2019large} and BIG\&QUIC \cite{cov:BIG_QUIC}.

\begin{algorithm}[h]
\caption{pISTA(\(S,\alpha,A^{(0)}\))}
\label{alg:pista_impl}
\begin{algorithmic}[1]
\STATE{\textbf{Result:} \(A\)}
\STATE{\textbf{Initialization:} \(k=0\);}
\WHILE{stop criteria not met}
    \STATE{Compute \(\mathcal{S}_A^{(k)}\) according to \cref{eq:free_set};}
    \STATE{Compute \(\mathcal{M}_A^{(k)}\) according to \cref{M};}
    \STATE{Compute \(\mathcal{G}^{(k)}\) according to \cref{G+};}
    \FOR{\(t\) in \(1,\ldots,\epsilon\)}
        \STATE{Compute \(D^{(K)}\) according to \cref{eq:d_sol};\\
        \(A^{(k+1)}=A^{(K)}+D^{(K)}\);}
        \IF{(\(A^{(k+1)}\succ 0 \)) \textbf{AND} (\(F(A^{(k+1)})< F(A^{(k)}\)))}
        \STATE{break;}
        \ENDIF
    \ENDFOR
\ENDWHILE
\end{algorithmic}
\end{algorithm}

\section{Convergence Analysis}
\label{sec:analysis}
Throughout this section, unless stated otherwise, all the matrices are assumed to be symmetric and real. The definitions of \(f(A)\), \(F(A)\), \(P(\Delta;t)\), \(g\), \(\mathcal{M}\), \(G\) and \(D\) are as above. We will denote \(D_t^{*(k)}\) as the direction \(D^{(k)}\) which satisfies equation \cref{eq:d_grad} for a given \(t\): 
\begin{multline} \label{eq:d_grad2}
    0\in\Bigg\{
    \frac{1}{t}D_t^{*(k)} +
    A^{(k)}\left(g^{(k)} \odot \mathcal{M}_A^{(k)}\right)A^{(k)}+\alpha A^{(k)}\left(\mathcal{G}^{(k)}\odot \mathcal{M}_A^{(k)}\right)A^{(k)} \\\Bigg|\ \mathcal{G}^{(k)}\in G^{(k)}\left(\mathcal{M}_A^{(k)} \odot D_t^{*(k)}\right) \Bigg\}
\,.\end{multline}
Also, denote by \(\mathcal{G}_t^{*(k)}\) the \(\mathcal{G}^{(k)}\) for which \(D_t^{*(k)}\) attains 0:
\begin{equation}  \label{eq:d_grad3}
    0=\frac{1}{t}D_t^{*(k)} +
    A^{(k)}\left(g^{(k)} \odot \mathcal{M}_A^{(k)}\right)A^{(k)}+\alpha A^{(k)}\left(\mathcal{G}_t^{*(k)}\odot \mathcal{M}_A^{(k)}\right)A^{(k)}
\,.
\end{equation}
In the proofs only, we will omit the iteration symbol when the meaning is evident. 

The rest of this section is organized as follows. In \cref{sec:analysis_fixed_point} we prove that if our pISTA iteration does not update \(A^{(k)}\), then \(A^{(k)}=\argmin_{A\succ0}F(A)\). 
Following that, in \cref{sec:analysis_min} we prove that for every iteration, there exists such a \(t\geq t_{min} >0\) that satisfies \cref{t_cond_pd}-\cref{t_cond_min}, i.e., keeps the matrix positive definite and decreases our objective value. Combining all the theorems and lemmas, we get the following:
\begin{corollary}
By \cref{coro:dec_positive}, \cref{lemma:fixed}, \cref{lemma4.1}, \cref{lemma:BCD4.3} and \cref{coro:compact}, pISTA converges to \(\argmin_{A\succ0}F(A)\).
\end{corollary}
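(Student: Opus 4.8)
The plan is to assemble the corollary from the constituent results exactly as the section outline advertises, so the proof is essentially a bookkeeping argument that chains together a fixed-point characterization, a well-definedness (existence of a valid step size) argument, and a compactness/monotonicity argument. First I would set up the iteration as a monotone sequence: by \cref{theorem:pd} and \cref{coro:pd} every iteration admits a step size $t \ge t_{pd} > 0$ keeping $A^{(k+1)} = A^{(k)} + \mathcal{M}_A^{(k)} \odot D^{(k)} \succ 0$, and by \cref{lemma4.1} (and \cref{lemma:BCD4.3}) there is a $t \ge t_{dc} > 0$ for which additionally $F(A^{(k+1)}) < F(A^{(k)})$ whenever $A^{(k)}$ is not already a minimizer; hence the linesearch \cref{t_cond_pd}--\cref{t_cond_min} always succeeds and $\{F(A^{(k)})\}$ is strictly decreasing (or the algorithm has already stopped at the optimum).

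Next I would invoke \cref{coro:compact}: the sublevel set $\{A \succ 0 : F(A) \le F(A^{(0)})\}$ is compact, so the iterates $\{A^{(k)}\}$ all live in a fixed compact set bounded away from the boundary of the PSD cone, and the decreasing sequence $F(A^{(k)})$ converges to some limit $F^\star$. I would then take any convergent subsequence $A^{(k_\ell)} \to \bar A \succ 0$ (which exists by compactness). The key is to show $\bar A$ is a fixed point of the pISTA map: the step-size floors $t_{pd}, t_{dc}$ can be taken uniform over the compact set, and the per-iteration decrease $F(A^{(k)}) - F(A^{(k+1)})$ is bounded below by a continuous, nonnegative function of $A^{(k)}$ that vanishes only at fixed points (this is what \cref{lemma4.1}/\cref{lemma:BCD4.3} deliver); since the total decrease $\sum_k (F(A^{(k)}) - F(A^{(k+1)})) = F(A^{(0)}) - F^\star < \infty$ is finite, the increments tend to zero, forcing $\bar A$ to be a fixed point. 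Finally, by \cref{lemma:fixed}, a fixed point of the iteration satisfies $\bar A = \argmin_{A \succ 0} F(A)$, and since $F$ is strictly convex on the PD cone the minimizer is unique, so every convergent subsequence has the same limit, giving $A^{(k)} \to \argmin_{A\succ 0} F(A)$.

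The main obstacle I anticipate is the middle step — upgrading "every iteration strictly decreases $F$" to "the limit is a minimizer." A naive argument only shows $F(A^{(k)}) \downarrow F^\star$, which by itself does not pin down $F^\star$ as the optimal value; one genuinely needs the quantitative per-step decrease from \cref{lemma4.1} together with uniformity of the constants $t_{pd}$, $t_{dc}$ over the compact sublevel set (and continuity of the relevant quantities in $A$), so that the decrease cannot shrink to zero except in the limit of approaching a fixed point. Care is also needed because the free-set mask $\mathcal{M}_A^{(k)}$ and the sign surrogate $\mathcal{G}^{(k)}$ in \cref{G+} are discontinuous in $A$; one must argue that at a fixed point the relevant optimality condition \cref{eq:d_grad} coincides with the true subdifferential condition $0 \in \partial F(\bar A)$ regardless of which branch of $\mathcal{G}$ is selected, which is precisely the content I would lean on \cref{lemma:fixed} to supply. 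Assembling these pieces, the corollary follows.
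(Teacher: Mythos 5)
Your proposal is correct and follows essentially the same route as the paper: the paper's own argument is exactly this chain of sufficient per-step decrease (\cref{lemma4.1}) plus level-set compactness (\cref{coro:compact}), a convergent-subsequence argument in which the vanishing increments are passed to the limit in the optimality condition and identified with the minimizer via \cref{lemma:fixed} (the paper's Lemma 4.2), and then full-sequence convergence (\cref{lemma:BCD4.3}). Your closing step via strict convexity of \(F\) on the positive definite cone is a minor, harmless substitute for the paper's citation of the corresponding lemma in \cite{cov:MultiBCD}.
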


\subsection{Fixed Point Iteration at the Minimum}
\label{sec:analysis_fixed_point}
The following \cref{lemma:fixed} states that if a pISTA iteration ends with no update to \(A^{(k)}\), then \(A^{(k)}=\argmin_{A\succ0}F(A) \).

Let us first note that the $-\log\det(A)$ term in \cref{eq:log_MLE} also serves as a barrier function for the constraint $A\succ 0$. That is, as \(A\) gets closer to the domain's boundary (closer to being singular), \(\det(A)\to0 \) and therefore, \(F(A)\to\infty\) due to the $\log$ term. Because of that, we can say that there exists some $\delta > 0$ for which the minimizer \(A^*\succeq \delta I\) and we can state that our optimality condition is given by
\begin{equation}\label{eq:min_sub_zero}
0 \in \frac{\partial F(A^{*})}{\partial A} \iff A^{*}=\argmin_{A\succ0}F(A).
\end{equation}
Furthermore, we can state the following corollary:
\begin{corollary} \label{coro:dec_positive}
    Due to the continuity of the objective function \(F(A)\), any decrease in the objective keeps \(A\) positive definite.
\end{corollary}

First, the following Lemma states that if \(A\) is the minimum of \(F(\cdot)\), then pISTA will result in no update.
\begin{lemma} \label{lemma:fixed}
\(D_t^{*(k)}=0 \iff A^{(k)}=\argmin_{A\succ0}F(A)\).
\end{lemma}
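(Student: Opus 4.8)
The plan is to establish both implications of the equivalence by analyzing the first-order optimality condition \cref{eq:d_grad2}. The key observation is that $D_t^{*(k)}=0$ means that $0$ lies in the sub-differential set evaluated at $D=0$; since $D_t^{*(k)}=0$ the Hessian term $\frac{1}{t}D_t^{*(k)}$ vanishes, so the condition becomes
\begin{equation*}
0\in\Bigl\{A^{(k)}\bigl(g^{(k)}\odot\mathcal{M}_A^{(k)}\bigr)A^{(k)}+\alpha A^{(k)}\bigl(\mathcal{G}^{(k)}\odot\mathcal{M}_A^{(k)}\bigr)A^{(k)}\ \Big|\ \mathcal{G}^{(k)}\in G^{(k)}\bigl(0\bigr)\Bigr\},
\end{equation*}
where $G^{(k)}(0)$ is the sub-differential of the $\ell_1$ norm evaluated at $A^{(k)}$. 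Since $A^{(k)}\succ0$ is invertible, multiplying on both sides by $W^{(k)}=(A^{(k)})^{-1}$ shows this is equivalent to $0\in g^{(k)}\odot\mathcal{M}_A^{(k)}+\alpha\,\mathcal{G}^{(k)}\odot\mathcal{M}_A^{(k)}$ for some $\mathcal{G}^{(k)}\in G^{(k)}(0)$, i.e., the masked stationarity condition holds on the free set.

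For the direction ($\Leftarrow$): if $A^{(k)}=\argmin_{A\succ0}F(A)$, then by convexity $0\in\partial F(A^{(k)})=g^{(k)}+\alpha\,\partial\|A^{(k)}\|_1$, so there is $\mathcal{G}^{(k)}\in G^{(k)}(0)$ with $g^{(k)}+\alpha\mathcal{G}^{(k)}=0$ entry-wise. In particular this holds after masking with $\mathcal{M}_A^{(k)}$, so plugging $\mathcal{G}^{(k)}$ into \cref{eq:d_grad2} with the Hessian term removed gives $0$ in the set, hence $D_t^{*(k)}=0$ by uniqueness of the solution of the strictly convex problem \cref{eq:eq_quasi_newton}. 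For the direction ($\Rightarrow$): from the masked stationarity above we recover $g^{(k)}_{i,j}+\alpha\mathcal{G}^{(k)}_{i,j}=0$ for all $(i,j)\in\mathcal{S}_A^{(k)}$. It remains to check the indices \emph{outside} the free set: there, by \cref{eq:free_set} we have $A^{(k)}_{i,j}=0$ and $|g^{(k)}_{i,j}|\le\alpha$, so the sub-differential of $|A^{(k)}_{i,j}|=0$ is the full interval $[-\alpha,\alpha]$ scaled, and we may choose $\mathcal{G}_{i,j}=-g^{(k)}_{i,j}/\alpha\in[-1,1]$ to satisfy $g^{(k)}_{i,j}+\alpha\mathcal{G}_{i,j}=0$ there too. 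Combining both index sets yields $0\in g^{(k)}+\alpha\,\partial\|A^{(k)}\|_1=\partial F(A^{(k)})$, and since $F$ is convex on the positive-definite cone, $A^{(k)}$ is the global minimizer.

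The main obstacle I anticipate is the careful bookkeeping between the three coupled objects $\mathcal{M}_A^{(k)}$, $G^{(k)}$, and the free-set definition \cref{eq:free_set}: one must verify that the particular $\mathcal{G}^{(k)}$ realizing $D_t^{*(k)}=0$ is genuinely a valid sub-gradient of $\|A^{(k)}+0\|_1$ (it is, since $\mathcal{M}_A^{(k)}$ contains all nonzero entries of $A^{(k)}$, so entries outside the mask correspond to $A^{(k)}_{i,j}=0$ where any value in $[-1,1]$ is admissible), and conversely that the $\argmin$ subgradient constructed from the free-set data can always be extended to all indices. A secondary technical point is that inverting by $W^{(k)}$ preserves the "$0\in$" membership because the map $X\mapsto W^{(k)}XW^{(k)}$ is a linear bijection on symmetric matrices; I would state this explicitly. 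Everything else is routine manipulation of the soft-threshold formula \cref{eq:d_sol} — indeed, one can alternatively read off the equivalence directly from \cref{eq:d_sol} by noting $D^{(k)}_{i,j}=0$ for all $i,j$ iff $A^{(k)}_{i,j}=\text{SoftThreshold}(A^{(k)}_{i,j}-t B^{(k)}_{i,j},\,t C^{(k)}_{i,j})$ for all $i,j$, which unwinds to the same stationarity statement.
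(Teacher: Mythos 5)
Your proposal is correct and follows essentially the same route as the paper's proof: both directions reduce to the stationarity condition \cref{eq:d_grad2}, using that $X\mapsto A^{(k)}XA^{(k)}$ is invertible, that masking preserves the zero identity on the free set, and that outside the free set $A^{(k)}_{i,j}=0$ with $|g^{(k)}_{i,j}|\le\alpha$ so the subgradient can be completed by $-g^{(k)}_{i,j}/\alpha$. Your explicit appeal to uniqueness of the minimizer of the strictly convex subproblem (and the correct sign in the extension $\mathcal{G}_{i,j}=-g^{(k)}_{i,j}/\alpha$) only makes the same argument slightly more careful than the paper's version.
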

\begin{proof}
 Assume \(A^{(k)}=\argmin_{A\succ0}F(A)\), therefore, according to \cref{eq:min_sub_zero}, 0 is one of the sub-gradients of \(F(A)\) at \(A^{(k)}\):
\begin{equation}
    0 \in \frac{\partial F(A^{(k)})}{\partial A} = \Bigg\{
    g+\alpha \mathcal{G} \Bigg| \begin{array}{lr}
         \mathcal{G}_{i,j}=sign(A_{i,j})&  A_{i,j}\neq 0\\
         \mathcal{G}_{i,j}\in[-1,1]& A_{i,j}=0
    \end{array} \Bigg\} = \Bigg\{g+\alpha \mathcal{G}\ \Bigg|\ \mathcal{G}\in G(0) \Bigg\}
\,.\end{equation}
Let \(\mathcal{G}^{*}\) be the sub-gradient \(\mathcal{G}\) for which \(g+\alpha \mathcal{G}^{*}=0\). Thus:
\begin{align*}
    &g+\alpha \mathcal{G}^{*}=0\\
    \Rightarrow& (g+\alpha \mathcal{G}^{*}) \odot \mathcal{M}_A = 0\\
    \Rightarrow& A\left((g+\alpha \mathcal{G}^{*}) \odot \mathcal{M}_A\right)A = 0\\
    \Rightarrow& A\left(g\odot \mathcal{M}_A\right)A+\alpha A \left(\mathcal{G}^{*}\odot \mathcal{M}_A\right)A = 0
\,.\end{align*}
If we write this in a sub-differential form, we get:
\begin{equation*}
    0 \in \Bigg\{
    A\left(g\odot \mathcal{M}_A\right)A+\alpha A \left(\mathcal{G}\odot \mathcal{M}_A\right)A\ \Bigg|\  \mathcal{G}\in G(0) \Bigg\}
\,,\end{equation*}
which, according to \cref{eq:d_grad2}, means that \(D_t^{*}=0\) has a 0 in its sub-gradients.

Now, we will prove that if a pISTA iteration ends with no update, then \(A\) is the \(F(\cdot)\) minimum. Assume \(D_t^{*}=0\), then, according to \cref{eq:d_grad3}, \(\mathcal{G}_t^{*}\) satisfies:
\begin{align*}
    & A\left(g\odot \mathcal{M}_A\right)A+\alpha A \left(\mathcal{G}_t^{*}\odot \mathcal{M}_A\right)A = 0\\
    \Rightarrow & A\left((g +\alpha \mathcal{G}_t^{*})\odot \mathcal{M}_A\right)A = 0\\
    \Rightarrow & (g +\alpha \mathcal{G}_t^{*})\odot \mathcal{M}_A = 0
\,.\end{align*}

Let us now define \(\hat{\mathcal{G}}\) so that \(g +\alpha \hat{\mathcal{G}} \in \frac{\partial F}{\partial A}\):
\begin{gather}
    \hat{\mathcal{G}}_{i,j}=\Bigg\{\begin{array}{lr}
         [\mathcal{G}^{*}_t]_{i,j}&  [\mathcal{M}_A]_{i,j}\neq 0 \\
         \left[ g/\alpha\right]_{i,j} & [\mathcal{M}_A]_{i,j}=0
    \end{array}
\,.\end{gather}
It is clear that \(g +\alpha \hat{\mathcal{G}} = 0\), because if \([\mathcal{M}_A]_{i,j}=0\), then by definition \(A_{i,j}=0\) and \(|g|\leq\alpha\). This implies that \(A^{(k)}=\argmin_{A\succ0}F(A)\).
\end{proof}
\noindent In the proof of \cref{lemma:fixed}, no assumptions were made on \(t\), hence it holds for any $t$.

\subsection{Decrease in the Objective Function}
\label{sec:analysis_min}
In this section we show that under suitable conditions, the sequence \(\{F(A^{(k)})\}\) converges to \(\min_{A\succ0}F(A)\), which means that the sequence \(\{A^{(k)}\}\) converges to \(\argmin_{A\succ0}F(A)\). Then, we prove that our pISTA iteration satisfies those conditions, which completes the convergence proof.
In this section, we denote the space of symmetric positive definite
matrices by \(S_{++}\).

Before proving our main result, we state a few auxiliary lemmas. We %\tgray{next}
first prove that \(f(A)\) has a Lipschitz continuous gradient under suitable conditions and state an important property for this kind of function.
%\begin{lemma}
%For any square matrices \(A\succ0\) and \(B\): \(||AB||_F\leq ||A||_2||B||_F\).
%\end{lemma}
%\begin{proof}
%Denote by \(b_i\) the \(i\)-th column of matrix B, then:
%\begin{equation*}
%    ||AB||_F^2=\sum_i ||Ab_i||_2^2 \leq ||A||_2^2\sum_i ||b_i||_2^2 = ||A||_2^2||B||_F^2
%\,.\end{equation*}
%\end{proof}
\begin{lemma} \label{lemma:lipsithc_function_f}
Let \(\beta\) be a positive constant, then the function \(f(A)\) has a Lipschitz continuous gradient over the domain \(B=\{X\in S_{++}\big|\beta I \preceq X \preceq \gamma I\}\) with a Lipschitz constant of \(\frac{1}{\beta^2}\). As a result we get that for every \(Y,X\in B\):
\begin{equation}\label{eq:Taylor_bound}
    f(Y) \leq f(X) + \llangle \nabla f(X), (Y-X)\rrangle+\frac{1}{2\beta^2}\llangle Y-X, Y-X \rrangle.
\end{equation}
\end{lemma}
\begin{proof}
To prove that the function \(f(A)\) has a Lipschitz continuous gradient with a Lipschitz constant of \(\frac{1}{\beta^2}\), we need to prove that \(||\nabla f(X) - \nabla f(Y)||_F \leq  \frac{1}{\beta^2}||X-Y||_F\) for every \(X,Y\in B\). The gradient is given by \(\nabla f(A) = S - A^{-1}\), hence \(\forall X,Y\in B\):
\begin{align*}
    ||\nabla f(X) - \nabla f(Y)||_F &= ||Y^{-1} - X^{-1}||_F\\
    &=||Y^{-1}(X - Y)X^{-1}||_F\\
    &\leq ||Y^{-1}||_2 ||X-Y||_F ||X^{-1}||_2\\
    &= \frac{1}{\lambda_{min}(Y)}\frac{1}{\lambda_{min}(X)}||X-Y||_F\\
    &\leq \frac{1}{\beta^2}||X-Y||_F
\,,\end{align*}
where in the third inequality $\|\cdot\|_2$ denotes the induced $\ell_2$ norm. From here, \eqref{eq:Taylor_bound} follows immediately. 
\end{proof}

The rest of the proofs are inspired by \cite{cov:MultiBCD}, \cite{cov:QUIC} and \cite{proof:sparse_recon}, following the proofs of \cite{cov:MultiBCD} with proper adaptations according to our algorithm.
\begin{lemma} [corresponds to lemma 4.1 in \cite{cov:MultiBCD}] \label{lemma4.1}
Let \(\beta\) be a positive constant and \(B=\{X\in S_{++}\big|\beta I \preceq X \preceq \gamma I\}\). Assume that for every iteration \(k\), \(A^{(k)}\in B\), then:
\begin{equation}\label{eq4.1}
    F(A^{(k)}) - F(A^{(k+1)}) \geq L \cdot ||A^{(k)} - A^{(k+1)}||^2_F  
\,.\end{equation}
Where \(L>0\). Furthermore, the linesearch parameter \(t\) can be chosen to be bounded away from zero, i.e., \(t\geq t_{min}>0\).
\end{lemma}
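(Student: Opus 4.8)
The plan is to run the standard ISTA ``sufficient decrease'' argument, adapted to the preconditioned quadratic model. Fix an iteration $k$; to lighten the notation drop the superscript, write $W=A^{-1}$, and denote the step actually taken by $Z:=\mathcal{M}_A\odot D_t^{*(k)}$, so that $A^{(k+1)}=A+Z$ and $||Z||_F=||A^{(k+1)}-A^{(k)}||_F$. By \cref{lemma:T_min} we also have $Z=A(\mathcal{M}_A\odot\Delta_t^{*(k)})A$; writing $V:=\mathcal{M}_A\odot\Delta_t^{*(k)}$, this gives $V=WZW$, hence the preconditioned energy of the step equals $\langle AVA,V\rangle=\langle Z,V\rangle=\langle Z,WZW\rangle$.

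First I would bound $F(A^{(k+1)})$ from above in two complementary ways. On one hand, $\Delta_t^{*(k)}$ minimizes the convex function $T(\cdot\,;t)$ and $T(0;t)=F(A^{(k)})$, so $T(\Delta_t^{*(k)};t)\le F(A^{(k)})$. On the other hand, since $A^{(k)},A^{(k+1)}\in B$ by hypothesis, \cref{coro:lip_prop} gives $f(A^{(k+1)})\le f(A^{(k)})+\langle g,Z\rangle+\tfrac{1}{2\beta^2}||Z||_F^2$; adding $\alpha||A^{(k)}+Z||_1$ to both sides and using \cref{lemma:T_min} together with the fact that $Z$ is $\mathcal{M}_A$-masked to identify $f(A^{(k)})+\langle g,Z\rangle+\alpha||A^{(k)}+Z||_1=T(\Delta_t^{*(k)};t)-\tfrac{1}{2t}\langle Z,WZW\rangle$, I obtain
\begin{equation*}
F(A^{(k+1)})\ \le\ T(\Delta_t^{*(k)};t)-\tfrac{1}{2t}\langle Z,WZW\rangle+\tfrac{1}{2\beta^2}||Z||_F^2\ \le\ F(A^{(k)})-\tfrac{1}{2t}\langle Z,WZW\rangle+\tfrac{1}{2\beta^2}||Z||_F^2 .
\end{equation*}
Since $A\preceq\gamma I$ forces $W\succeq\gamma^{-1}I$, two applications of the lower-bound companion of the lemma $||AB||_F\le||A||_2||B||_F$ (namely $||CX||_F\ge\sigma_{min}(C)\,||X||_F$) give $\langle Z,WZW\rangle=||W^{1/2}ZW^{1/2}||_F^2\ge\gamma^{-2}||Z||_F^2$, whence
\begin{equation*}
F(A^{(k)})-F(A^{(k+1)})\ \ge\ \Big(\tfrac{1}{2t\gamma^2}-\tfrac{1}{2\beta^2}\Big)||A^{(k+1)}-A^{(k)}||_F^2 .
\end{equation*}
Thus whenever $t$ is small enough, specifically $t\le\beta^2/(2\gamma^2)$, the coefficient is at least $\tfrac{1}{2\beta^2}$, proving \cref{eq4.1} with $L:=\tfrac{1}{2\beta^2}>0$.

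For the lower bound on the step size, recall that \cref{theorem:pd} and \cref{coro:pd} give \cref{t_cond_pd} for every $0<t<t_0=\lambda_{min}(A)/\sqrt{\eta}$, while the last display gives \cref{t_cond_min} — strictly, since the step $Z$ is nonzero unless $A^{(k)}$ is already optimal by \cref{lemma:fixed} — for every $0<t\le\beta^2/(2\gamma^2)$. Both thresholds are bounded below by a positive constant depending only on $S,\alpha,n,\beta,\gamma$: indeed $\lambda_{min}(A)\ge\beta$, while $\eta$ is bounded above because $||g||_F=||S-W||_F\le||S||_F+\sqrt{n}/\beta$, $||A||_2\le\gamma$, and $||\mathcal{G}\odot\mathcal{M}||_F\le n$. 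A backtracking linesearch — say halving $t$ from an initial guess — therefore terminates with $t\ge t_{min}:=\tfrac12\min\{\inf_k t_0^{(k)},\ \beta^2/(2\gamma^2)\}>0$.

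The main obstacle I anticipate is the bookkeeping in the second paragraph: verifying that, at $\Delta=\Delta_t^{*(k)}$, the linear and $\ell_1$ terms of the model $T$ reproduce exactly the first-order part of $F$ at $A^{(k)}+Z$ — this is where the free-set mask $\mathcal{M}_A^{(k)}$, the change of variables $D=A\Delta A$, and \cref{lemma:T_min} must all be combined consistently — and then spotting the condition $t\le\beta^2/(2\gamma^2)$ that keeps the decrease coefficient positive. Everything else reduces to the norm equivalence $\gamma^{-2}||Z||_F^2\le\langle Z,WZW\rangle\le\beta^{-2}||Z||_F^2$ supplied by the spectral bounds defining $B$.
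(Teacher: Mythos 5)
Your proposal is correct and follows essentially the same route as the paper's proof: bound the smooth decrease via the Lipschitz-gradient property (\cref{coro:lip_prop}), use \cref{lemma:T_min} and the minimizer property \(T(\Delta_t^{*};t)\le T(0;t)=F(A^{(k)})\) to absorb the first-order and \(\ell_1\) terms, and lower-bound the preconditioned energy by \(\gamma^{-2}\|Z\|_F^2\), yielding the coefficient \(\tfrac{1}{2t\gamma^2}-\tfrac{1}{2\beta^2}\) and the same step-size threshold \(t\le\tfrac12(\beta/\gamma)^2\). Your extra care with the identity including the \(\alpha\|A+Z\|_1\) term (which the paper's corresponding display omits, apparently a typo) and with bounding \(t\) away from zero via \cref{coro:pd} and \(\eta\) is consistent with, and slightly more explicit than, the paper's argument.
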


\begin{proof}
According to pISTA iteration, \(A^{(k+1)} = A^{(k)}+\mathcal{M}_A^{(k)}\odot D_t^{*(k)}\). Therefore,
\begin{equation*}
    \begin{split}
        F(A^{(k)}) - F(A^{(k+1)}) 
        & = F(A^{(k)}) - F(A^{(k)}+\mathcal{M}_A^{(k)}\odot D_t^{*(k)})
    \end{split}
\,,\end{equation*}
and from now on, we shall omit the iteration symbol \(k\):
\begin{equation*}
    \begin{split}
        F(A^{(k)}) - F(A^{(k+1)}) 
        & = F(A) - F(A+\mathcal{M}_A\odot D_t^{*})\\
        & = F(A) - f(A+\mathcal{M}_A\odot D_t^{*}) - \alpha||A +\mathcal{M}_A\odot D_t^{*}||_1
\,.    \end{split}
\end{equation*}
Using \cref{lemma:lipsithc_function_f} on \(f(A+\mathcal{M}_A\odot D_t^{*})\):
\begin{equation}
\label{eq:LinEq1}
    \begin{split}
        F(A^{(k)}) - F(A^{(k+1)})
        &\geq  \alpha||A||_1 - \Big(\llangle g,\mathcal{M}_A\odot D_t^{*}\rrangle \\
        &\quad + \frac{1}{2\beta^2}\llangle\mathcal{M}_A\odot D_t^{*},\mathcal{M}_A\odot D_t^{*}\rrangle+\alpha||A+\mathcal{M}_A\odot D_t^{*}||_1\Big)\\
        &\geq  \alpha||A||_1 - \Big(\llangle g,\mathcal{M}_A\odot D_t^{*}\rrangle\\
        &\quad + \frac{1}{2\beta^2}\llangle D_t^{*},D_t^{*}\rrangle+\alpha||A+\mathcal{M}_A\odot D_t^{*}||_1\Big).
        \end{split}
\end{equation}
%\tgray{
%By the definition of \(T(\Delta;t)\) and \cref{lemma:T_min}:
%\begin{equation*}
%    \begin{split}
%        \llangle g,\mathcal{M}_A\odot D_t^{*}\rrangle = T(\Delta^{*}_t;t) &- \alpha||A+\mathcal{M}_A\odot D_t^{*}||_1\\
%        &-\frac{1}{2t}\llangle\mathcal{M}_A\odot D_t^{*},A^{-1}\left(\mathcal{M}_A \odot D_t^{*}\right)A^{-1}\rrangle-f(A)
%        \,.    \end{split}
%\end{equation*}
%}
By the definition of \(P(\Delta;t)\) in \cref{eq:eq_quasi_newton} and setting $D_t^*=A\Delta_t^*A$:
\begin{equation}
\label{eq:LinEq2}
    \begin{split}
        \llangle g,\mathcal{M}_A\odot D_t^{*}\rrangle = P(\Delta^{*}_t;t) &- \alpha||A+\mathcal{M}_A\odot D_t^{*}||_1\\
        &-\frac{1}{2t}\llangle D_t^{*},A^{-1}D_t^{*}A^{-1}\rrangle-f(A)
        \,.    \end{split}
\end{equation}
Therefore, by \cref{eq:LinEq1} and \cref{eq:LinEq2}, using $F(A)=f(A)+\alpha\|A\|_1$ we get 
%\tgray{
%\begin{equation*}
%    \begin{split}
%     F(A^{(k)}) - F(A^{(k+1)})
%        & \geq f(A)+\alpha||A||_1 - \frac{1}{2\beta^2}\llangle\mathcal{M}_A\odot D_t^{*},\mathcal{M}_A\odot D_t^{*}\rrangle \\
%        &\quad+\frac{1}{2t}\llangle\mathcal{M}_A\odot D_t^{*},A^{-1}\left(\mathcal{M}_A \odot D_t^{*}\right)A^{-1}\rrangle - T(\Delta_t^{*};t)
%        \,.    \end{split}
%\end{equation*}
%}
\begin{equation*}
    \begin{split}
     F(A^{(k)}) - F(A^{(k+1)})
        & \geq F(A) - \frac{1}{2\beta^2}\llangle D_t^{*},D_t^{*}\rrangle \\
        &\quad+\frac{1}{2t}\llangle D_t^{*},A^{-1} D_t^{*}A^{-1}\rrangle - P(\Delta_t^{*};t)
        \,.    \end{split}
\end{equation*}

\noindent Because \({\Delta}_t^{*}=\argmin_{\Delta} P(\Delta;t)\), we know that \(P(\Delta_t^{*};t)\leq P(0;t)=F(A)\):
\begin{equation*}
    \begin{split}
     F(A^{(k)}) - F(A^{(k+1)})
        & \geq F(A) - \frac{1}{2\beta^2}\llangle D_t^{*}, D_t^{*}\rrangle %\\
        %&\quad
        +\frac{1}{2t}\llangle D_t^{*},A^{-1} D_t^{*}A^{-1}\rrangle - P(0;t)\\
        &\geq -\frac{1}{2\beta^2}\llangle D_t^{*}, D_t^{*}\rrangle
        +\frac{1}{2t}\llangle D_t^{*},A^{-1} D_t^{*}A^{-1}\rrangle\\
        &\geq (\frac{1}{2t\gamma^2}-\frac{1}{2\beta^2})\llangle D_t^{*},D_t^{*}\rrangle
\,.    \end{split}
\end{equation*}

Denote \(L=\frac{1}{2t\gamma^2}-\frac{1}{2\beta^2}\), then \(L\) is positive as long as \(t<(\frac{\beta}{\gamma})^2\). Furthermore, \(t\) can be chosen to be bounded
away from zero, e.g., \(t \geq t_{min}  = \frac{1}{2}\cdot(\frac{\beta}{\gamma})^2 > 0\).
\end{proof}

\begin{lemma} [corresponds to lemma 4.2 on \cite{cov:MultiBCD}]
Let \(\beta\) be a positive constant and  \(B=\{X\in S_{++}\big|\beta I \preceq X \preceq \gamma I\}\), and assume that for every iteration \(k\), \(A^{(k)}\in B\). Let \(\{A^{(k_j)}\}\) be any infinite and converging sub-series of \(\{A^{(k)}\}\) and let \(\bar{A}\) denote its limit. Then \(\bar{A}=\argmin_{A\succ0}F(A)\).
\end{lemma}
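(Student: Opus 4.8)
The plan is to promote the per-step descent estimate of \cref{lemma4.1} to a statement about the whole sequence, deduce that the pISTA update vanishes along the convergent subsequence, pass to the limit in the system \cref{eq:d_grad3} that defines $D_t^{*(k)}$, and read off that $\bar A$ satisfies the first-order optimality condition $0\in\partial F(\bar A)$. First I would show $\|A^{(k)}-A^{(k+1)}\|_F\to 0$: since $A^{(k)}\in B$ and $F$ is continuous on the compact set $B$, it is bounded below there, so by \cref{lemma4.1} the sequence $\{F(A^{(k)})\}$ is nonincreasing and bounded below, hence convergent, giving $F(A^{(k)})-F(A^{(k+1)})\to 0$; telescoping the inequality $F(A^{(k)})-F(A^{(k+1)})\ge L\|A^{(k)}-A^{(k+1)}\|_F^2$ of \cref{lemma4.1}, with the uniform constant $L>0$, yields $\sum_k\|A^{(k)}-A^{(k+1)}\|_F^2<\infty$, so $A^{(k)}-A^{(k+1)}\to 0$. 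In particular $A^{(k_j+1)}\to\bar A$ and $\mathcal M_A^{(k_j)}\odot D_{t_{k_j}}^{*(k_j)}=A^{(k_j+1)}-A^{(k_j)}\to 0$; note also $\bar A\in B$, since $B$ is closed, so $\bar A\succ 0$.

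Next I would stabilise the bounded and discrete data along a subsequence. The masks $\mathcal M_A^{(k_j)}\in\{0,1\}^{n\times n}$ take finitely many values, so on a subsequence $\mathcal M_A^{(k_j)}\equiv\bar{\mathcal M}$; by \cref{coro:pd} and \cref{lemma4.1} the linesearch parameters $t_{k_j}$ stay in a compact subinterval of $(0,\infty)$, so on a further subsequence $t_{k_j}\to\bar t>0$; from \cref{eq:d_grad3} and the boundedness of $A^{(k_j)}$, $g^{(k_j)}$ and $\mathcal G_{t_{k_j}}^{*(k_j)}\in[-1,1]^{n\times n}$, the $D_{t_{k_j}}^{*(k_j)}$ are bounded, so on a further subsequence $D_{t_{k_j}}^{*(k_j)}\to\bar D$ and $\mathcal G_{t_{k_j}}^{*(k_j)}\to\bar{\mathcal G}$. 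Using continuity of $X\mapsto X^{-1}$ and of $\nabla f$ at $\bar A\succ 0$ (so $g^{(k_j)}\to\bar g:=\nabla f(\bar A)$) and passing to the limit in \cref{eq:d_grad3} gives $\frac1{\bar t}\bar D+\bar A\big((\bar g+\alpha\bar{\mathcal G})\odot\bar{\mathcal M}\big)\bar A=0$, while $\bar{\mathcal M}\odot\bar D=\lim\mathcal M_A^{(k_j)}\odot D_{t_{k_j}}^{*(k_j)}=0$.

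The crux is a positive-definiteness argument. Set $E:=(\bar g+\alpha\bar{\mathcal G})\odot\bar{\mathcal M}$, which is symmetric and vanishes off the support of $\bar{\mathcal M}$; the two relations above give $\bar{\mathcal M}\odot(\bar AE\bar A)=0$, hence $\langle E,\bar AE\bar A\rangle=\langle E,\bar{\mathcal M}\odot(\bar AE\bar A)\rangle=0$, and since $\langle E,\bar AE\bar A\rangle=\|\bar A^{1/2}E\bar A^{1/2}\|_F^2$ with $\bar A\succ 0$, this forces $E=0$, so $\bar D=0$ and $(\bar g+\alpha\bar{\mathcal G})\odot\bar{\mathcal M}=0$. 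From here I would follow the second half of the proof of \cref{lemma:fixed}: at indices with $\bar A_{i,j}\ne 0$ one has $[\bar{\mathcal M}]_{i,j}=1$ and, since $A^{(k_j+1)}\to\bar A$, $\bar{\mathcal G}_{i,j}=sign(\bar A_{i,j})$; at indices with $[\bar{\mathcal M}]_{i,j}=0$ the free-set definition \cref{eq:free_set}, taken to the limit, forces $\bar A_{i,j}=0$ and $|\bar g_{i,j}|\le\alpha$. Then the matrix $\hat{\mathcal G}$ equal to $\bar{\mathcal G}$ where $[\bar{\mathcal M}]_{i,j}=1$ and to $-\bar g_{i,j}/\alpha$ where $[\bar{\mathcal M}]_{i,j}=0$ satisfies $\hat{\mathcal G}\in\partial\|\bar A\|_1$ and $\nabla f(\bar A)+\alpha\hat{\mathcal G}=0$, i.e.\ $0\in\partial F(\bar A)$; by convexity of $F$ and $\bar A\in S_{++}$ this gives $\bar A=\argmin_{A\succ 0}F(A)$.

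The step I expect to be delicate is the mask bookkeeping: $\bar{\mathcal M}$ need not equal the free-set mask $\mathcal M_{\bar A}$ — it may contain extra indices with $\bar A_{i,j}=0$ and $|\bar g_{i,j}|\le\alpha$ — so \cref{lemma:fixed} cannot be quoted verbatim, and the limiting identity only constrains the masked combination $(\bar g+\alpha\bar{\mathcal G})\odot\bar{\mathcal M}$; it is precisely the estimate $\langle E,\bar AE\bar A\rangle=\|\bar A^{1/2}E\bar A^{1/2}\|_F^2\ge 0$ that converts this masked identity into $E=0$, and in turn makes the limit optimal. The remaining care points are routine: that $\bar A$ stays in $B$ (closedness), that $\nabla f$ and $X\mapsto X^{-1}$ are continuous at $\bar A\succ 0$, and that the constant $L$ in the telescoping is uniform in $k$, which is exactly what \cref{lemma4.1} asserts.
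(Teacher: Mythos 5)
Your proof is correct, and while its skeleton matches the paper's (use \cref{lemma4.1} to make the successive differences vanish, then pass to the limit along the subsequence), the decisive final step is genuinely different and, in fact, more complete. The paper's proof disposes of the limit in one line: it asserts that \(\|A^{(k_j)}-A^{(k_j+1)}\|_F\to0\) forces \(\|D_t^{*(k_j)}\|_F\to0\) ``because \(\mathcal{M}_A^{(k)}\neq0\)'', and then invokes \cref{lemma:fixed} ``by taking the limit''. Both of these are glossed: the mask only controls \(\mathcal{M}_A^{(k_j)}\odot D_t^{*(k_j)}\), not the unmasked entries of \(D_t^{*(k_j)}\), and \cref{lemma:fixed} is stated at a fixed iterate with its own free-set mask, whereas the limiting mask \(\bar{\mathcal M}\) (after your subsequence extraction) need not coincide with the free-set mask of \(\bar A\) — exactly the delicacy you flag. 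Your replacement argument handles both points: stabilizing the finitely-valued masks, the step sizes and the bounded sub-gradients along a further subsequence, passing to the limit in \cref{eq:d_grad3}, and then using the quadratic-form identity \(\llangle E,\bar A E\bar A\rrangle=\|\bar A^{1/2}E\bar A^{1/2}\|_F^2\) with \(\bar A\succ0\) to convert the masked stationarity relation into \(E=(\bar g+\alpha\bar{\mathcal G})\odot\bar{\mathcal M}=0\), after which the second half of \cref{lemma:fixed}'s argument (with the correct choice \(\hat{\mathcal G}_{i,j}=-\bar g_{i,j}/\alpha\) off the mask, fixing the sign slip in the paper's \(\hat{\mathcal G}\)) yields \(0\in\partial F(\bar A)\). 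So what the paper buys with brevity by citing \cref{lemma:fixed}, you buy back with a self-contained limit argument that actually justifies the steps the paper leaves implicit; the only cosmetic caveat is that symmetry of \(E\) (hence of \(\bar{\mathcal G}\)) is needed for the Frobenius-norm identity, which is covered by the paper's standing assumption that all matrices in this section are symmetric.
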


\begin{proof}
Since the sub-series \(\{A^{(k_j)}\}\) convergences to \(\bar{A}\), then \(\{F(A^{(k_j)})\}\) convergences to \(F(\bar{A})\). According to \cref{lemma4.1}, the full series \(\{F(A^{(k)})\}\) is monotone and hence convergent because \(\{F(A^{(k_j)})\}\) is convergent. Therefore, \(\{F(A^{(k_j+1)})-F(A^{(k_j)})\}\rightarrow0\), which implies following \cref{eq4.1} that \(||A^{(k_j)} - A^{(k_j+1)}||^2_F \rightarrow 0\) and \(\lim_{j\rightarrow \infty}A^{(k_j+1)}=\bar{A}\). According to pISTA, \(A^{(k+1)} = A^{(k)}+\mathcal{M}_A^{(k)}\odot D_t^{*(k)}\) and because \(\mathcal{M}_A^{(k)}\neq 0\), \(||A^{(k_j)} - A^{(k_j+1)}||^2_F \rightarrow 0\) implies \(||D_t^{*(k)}||^2_F\rightarrow 0\). We know that \(D_t^{*(k)}\) satisfies \cref{eq:d_grad2}, by taking the limit as \(j\rightarrow\infty\) and using \cref{lemma:fixed} we get that \(\bar{A}=\argmin_{A\succ0}F(A)\).
\end{proof}

\begin{lemma} \label{lemma:BCD4.3} [corresponds to lemma 4.3 on \cite{cov:MultiBCD}]
Let \(\beta\) be a positive constant and \(B=\{X\in S_{++}\big|\beta I \preceq X \preceq \gamma I\}\). Assume that for every iteration \(k\), \(A^{(k)}\in B\). Then the series \(\{A^{(k)}\}\) has a limit and it is given by \(\argmin_{A\succ0}F(A)\).
\end{lemma}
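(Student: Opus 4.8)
The plan is to derive convergence of the whole sequence from two ingredients already in hand: the compactness of the feasible set $B$, and the fact---established in the previous lemma (corresponding to Lemma 4.2 of \cite{cov:MultiBCD}), which rests on the monotone decrease of \cref{lemma4.1}---that every convergent subsequence of $\{A^{(k)}\}$ has limit $\argmin_{A\succ0}F(A)$. First I would observe that, since $\beta>0$, the condition $\beta I\preceq X$ already forces $X\succ0$, so $B=\{X\text{ symmetric}: \beta I\preceq X\preceq\gamma I\}$ is nothing but the set of symmetric matrices with eigenvalues in $[\beta,\gamma]$, which is closed and bounded, hence compact. Since $A^{(k)}\in B$ for all $k$ by hypothesis, the sequence $\{A^{(k)}\}$ lies in a compact set, so by Bolzano--Weierstrass it has at least one convergent subsequence, and every subsequential limit again lies in $B\subset S_{++}$.

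Next I would note that all subsequential limits are in fact equal. The previous lemma gives that any such limit equals $\argmin_{A\succ0}F(A)$, and this minimizer is unique: $f$ has Hessian $A^{-1}\otimes A^{-1}\succ0$ on $S_{++}$, so $f$---and hence $F=f+\alpha\|\cdot\|_1$---is strictly convex on the convex set $S_{++}$, and a strictly convex function has at most one minimizer over a convex set. Denote this unique point $A^{*}$; then every convergent subsequence of $\{A^{(k)}\}$ converges to $A^{*}$.

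Finally I would close with the standard principle that a sequence lying in a compact set, all of whose convergent subsequences share the same limit $A^{*}$, converges to $A^{*}$. Concretely, arguing by contradiction: if $A^{(k)}\not\to A^{*}$, then for some $\varepsilon>0$ there is a subsequence with $\|A^{(k_j)}-A^{*}\|_F\ge\varepsilon$ for all $j$; lying in the compact set $B$, it admits a further subsequence converging to some $\hat A$, necessarily with $\|\hat A-A^{*}\|_F\ge\varepsilon$ so $\hat A\ne A^{*}$, contradicting the preceding paragraph. Hence $A^{(k)}\to A^{*}=\argmin_{A\succ0}F(A)$, which is the assertion.

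The Bolzano--Weierstrass extraction and the contradiction argument are routine. I expect the two points that actually need care to be (i) confirming that $B$ is genuinely compact, which hinges on $\beta>0$ so that positive definiteness becomes the closed constraint $\lambda_{\min}\ge\beta$ rather than an open one, and (ii) justifying uniqueness of the minimizer through strict convexity, since the previous lemma tacitly treats $\argmin_{A\succ0}F(A)$ as a single point that the limit must equal.
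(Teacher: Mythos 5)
Your proposal is correct and is essentially the argument the paper relies on: the paper's own proof simply notes that $B$ is compact and defers to the proof of Lemma 4.3 in the cited reference, which is precisely this compactness--plus--unique-subsequential-limit argument (with uniqueness of the minimizer coming from strict convexity of $f$). Your write-up just makes those standard steps explicit.
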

\begin{proof}
The domain \(B\) is compact, the rest of the proof follows the proof of lemma 4.3 in \cite{cov:MultiBCD} exactly.
\end{proof}
We have shown that pISTA converges under the condition of bounded eigenvalues. It remains to show that pISTA satisfies it.
\begin{lemma} [corresponds to lemma 2 on \cite{cov:QUIC}]
Let \(U=\{A | F(A)\leq F(A_0)\text{ and }A\in S_{++}\}\). Then,
 \(U\subseteq B=\{X\in S_{++}\big|\beta I \preceq X \preceq \gamma I\}\) where \(\beta>0\).
\end{lemma}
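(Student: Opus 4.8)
The plan is to show that the sublevel set $U$ is squeezed, in eigenvalue order, between a positive multiple $\beta I$ of the identity and a finite multiple $\gamma I$, with $\beta,\gamma$ depending only on $F(A_0)$, $n$ and $\alpha$. Everything rests on the decomposition $F(A)=-\log\det(A)+\mathrm{Tr}(SA)+\alpha\|A\|_1$ together with two elementary facts valid for $A\succ 0$: $\mathrm{Tr}(SA)\geq 0$ (since $S\succeq 0$), and $\|A\|_1\geq\mathrm{Tr}(A)$ (the diagonal entries of a positive definite matrix are positive, so $\sum_{i,j}|A_{i,j}|\geq\sum_i A_{i,i}$).

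For the upper bound on $\lambda_{\max}(A)$, I would discard the nonnegative term $\mathrm{Tr}(SA)$ and bound $\alpha\|A\|_1\geq\alpha\,\mathrm{Tr}(A)$, obtaining, with $\lambda_1\geq\cdots\geq\lambda_n>0$ the eigenvalues of $A$,
\[
F(A_0)\ \geq\ F(A)\ \geq\ -\log\det(A)+\alpha\,\mathrm{Tr}(A)\ =\ \sum_{i=1}^{n}\bigl(\alpha\lambda_i-\log\lambda_i\bigr).
\]
The scalar map $g(\lambda)=\alpha\lambda-\log\lambda$ is strictly convex and coercive, with minimum value $1+\log\alpha$ attained at $\lambda=1/\alpha$; dropping all but the $i=1$ summand gives $g(\lambda_1)\leq F(A_0)-(n-1)(1+\log\alpha)$. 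Since $g(\lambda)\to\infty$ as $\lambda\to\infty$, this bounds $\lambda_1$ above by a finite $\gamma$, namely the larger solution of $g(\lambda)=F(A_0)-(n-1)(1+\log\alpha)$ (such a solution exists because the right-hand side is at least $\min g$, as witnessed by $A_0\in U$), and $\gamma$ is independent of $A$.

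For the lower bound on $\lambda_{\min}(A)$, I would instead discard both nonnegative terms $\mathrm{Tr}(SA)$ and $\alpha\|A\|_1$ in $F(A)\leq F(A_0)$, which leaves $-\log\det(A)\leq F(A_0)$, i.e. $\prod_i\lambda_i=\det(A)\geq e^{-F(A_0)}$. Combining this with $\lambda_i\leq\gamma$ for $i=1,\dots,n-1$ gives $e^{-F(A_0)}\leq\lambda_{\min}(A)\,\gamma^{\,n-1}$, hence $\lambda_{\min}(A)\geq\beta:=e^{-F(A_0)}\gamma^{-(n-1)}>0$. Thus every $A\in U$ satisfies $\beta I\preceq A\preceq\gamma I$, which is the claim.

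This is essentially routine bookkeeping. The only delicate point is the upper bound: one must rule out that $-\log\det(A)$, which can be arbitrarily negative when eigenvalues are large, lets $F$ stay bounded while $\lambda_{\max}\to\infty$. The $\ell_1$ penalty — more precisely the trace term it dominates — is exactly what prevents this, through the coercivity of $g$; once $\gamma$ is in hand, the determinant bound delivers the lower eigenvalue bound immediately.
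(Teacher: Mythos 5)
Your proof is correct, and it is essentially the standard argument the paper invokes by citation: the paper gives no proof of its own, deferring to Lemma~2 of the QUIC paper, whose proof has exactly your structure (coercivity of $-\log\det(A)+\alpha\|A\|_1$ forces the upper eigenvalue bound, and the resulting determinant bound forces the lower one). The only cosmetic difference is that QUIC bounds $\alpha\|A\|_1\geq\alpha\|A\|_2$ and uses $\det(A)\leq\|A\|_2^n$, whereas you use $\alpha\|A\|_1\geq\alpha\,\mathrm{Tr}(A)$ and the eigenvalue sum, which changes nothing of substance.
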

\begin{proof}
See lemma 2 in \cite{cov:QUIC}.
\end{proof}
\begin{corollary} \label{coro:compact}
The series \(\{A^{(k)}\}\) created by the pISTA algorithm satisfies \cref{t_cond_min}, hence, \(A\) is contained in \(U\) and in \(B\).
\end{corollary}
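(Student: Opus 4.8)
The plan is to prove the two assertions of the corollary — that pISTA can always complete its linesearch, so that \cref{t_cond_min} is satisfiable at every step, and that consequently every iterate lies in $U$ and hence in $B$ — simultaneously, by induction on the iteration index $k$. The base case is immediate: $A^{(0)}\succ0$ by assumption and $F(A^{(0)})\le F(A^{(0)})$, so $A^{(0)}\in U$, and by the preceding lemma (the analogue of Lemma~2 of \cite{cov:QUIC}) $U\subseteq B$, hence $A^{(0)}\in B$.

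For the inductive step I would assume $A^{(k)}\in U\subseteq B$, so that $\beta I\preceq A^{(k)}\preceq\gamma I$ — exactly the standing hypothesis under which \cref{theorem:pd}, \cref{coro:pd} and \cref{lemma4.1} were proved. If $A^{(k)}=\argmin_{A\succ0}F(A)$ the algorithm has already converged and there is nothing to show; otherwise \cref{lemma:fixed} and its corollary give $D_t^{*(k)}\ne0$ for every $t>0$, and hence (from the explicit form \cref{eq:d_grad3} of $D_t^{*(k)}$ together with $A^{(k)}\succ0$) also $\mathcal{M}_A^{(k)}\odot D_t^{*(k)}\ne0$. I would then pick $t>0$ small enough to meet both linesearch conditions. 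By \cref{theorem:pd} and \cref{coro:pd}, every $t<t_0$ gives $A^{(k)}+\mathcal{M}_A^{(k)}\odot D_t^{*(k)}\succ0$, i.e.\ \cref{t_cond_pd}; here $t_0=\lambda_{min}(A^{(k)})/\sqrt\eta$ is bounded below uniformly in $k$, because $\lambda_{min}(A^{(k)})\ge\beta$ and the quantity $\eta$ is uniformly bounded over $B$ (it is built from $A^{(k)}$, $g^{(k)}=S-W^{(k)}$ and entrywise-bounded matrices, all of bounded norm on $B$). By \cref{lemma4.1}, for $t<(\beta/\gamma)^2$ the estimate \cref{eq4.1} holds with $L>0$, so $F(A^{(k)})-F(A^{(k+1)})\ge L\,\|\mathcal{M}_A^{(k)}\odot D_t^{*(k)}\|_F^2>0$, which is \cref{t_cond_min}. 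Taking $t$ to be the minimum of $t_0/2$ and $\tfrac12(\beta/\gamma)^2$ gives an admissible step bounded away from zero. Then $F(A^{(k+1)})<F(A^{(k)})\le F(A^{(0)})$ together with $A^{(k+1)}\succ0$ shows $A^{(k+1)}\in U$, and again $A^{(k+1)}\in B$, closing the induction.

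Assembling the induction, the whole sequence $\{A^{(k)}\}$ stays in $U\subseteq B$. This is precisely the hypothesis assumed by \cref{lemma4.1}, by the lemma following it, and by \cref{lemma:BCD4.3}; invoking those together with the fixed-point characterisation \cref{lemma:fixed} then yields the asserted convergence of pISTA to $\argmin_{A\succ0}F(A)$.

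The step I expect to be the main obstacle is disentangling the apparent circularity between ``$A^{(k)}\in B$'' — required in order to invoke \cref{theorem:pd} and \cref{lemma4.1} — and ``$A^{(k)}\in U$'', which (via the preceding lemma) is what actually supplies $A^{(k)}\in B$. The induction above is the intended resolution, but one must watch a hidden detail inside \cref{lemma4.1}: its proof applies \cref{coro:lip_prop} at $A^{(k+1)}$ as well as at $A^{(k)}$, so it needs $f$ to have a Lipschitz-continuous gradient on a set containing $A^{(k+1)}$. Only a lower spectral bound matters there (cf.\ the estimate in \cref{lemma:lipsithc_function_f}), and for $t<t_0/2$ one has $\|\mathcal{M}_A^{(k)}\odot D_t^{*(k)}\|_F\le t\sqrt\eta<\tfrac12\lambda_{min}(A^{(k)})$, so $A^{(k+1)}\succeq\tfrac{\beta}{2}I$ holds a priori — before the decrease of $F$ is known — which lets one run \cref{lemma4.1} (at worst with a slightly larger Lipschitz constant) and thereby close the argument cleanly.
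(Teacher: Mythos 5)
Your proposal is correct and takes essentially the approach the paper intends: the corollary is stated there without an explicit proof, resting on the fact that the linesearch enforces \cref{t_cond_pd}--\cref{t_cond_min}, that \cref{theorem:pd} and \cref{lemma4.1} guarantee an admissible $t$ bounded away from zero, and that the preceding lemma gives $U\subseteq B$. Your induction simply makes that implicit argument explicit (including the resolution of the $U$-versus-$B$ circularity and the a priori lower spectral bound on $A^{(k+1)}$ needed to apply \cref{coro:lip_prop}), so there is no gap.
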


\section{Numerical Results}
\label{sec:numer}
We evaluate the performance of our pISTA algorithm using two setups---one using the GPU implementation and one using the CPU one.

The first evaluation is done by comparing the performance of pISTA to the performance of GISTA \cite{cov:GISTA}, Orthant-Based Newton (OBN) \cite{cov:newton_like} and ALM \cite{cov:ALM} (without skipping step\footnote{As implemented by the authors: \url{https://www.math.ucdavis.edu/~sqma/ALM-SICS.html}}), all of which were implemented efficiently and easily on GPU based on matrix operations. We note that Newton-Lasso \cite{cov:newton_like}, VSM \cite{cov:VSM} and PSM \cite{cov:PSM} can be implemented in the same manner on the GPU as well, but all of them have worse performance than OBN \cite{cov:newton_like} and ALM \cite{cov:ALM} according to the authors' measurements. Other algorithms, such as SQUIC \cite{bollhofer2019large}, BIG\&QUIC \cite{cov:BIG_QUIC} and BCD-IC \cite{cov:BCD}, can be implemented on GPU as well. However, their implementations, using GPU public math library, might perform worse than the CPU counterparts due to the special routines needed in these algorithms (e.g., sparse Hessian-vector multiplication). Optimal implementations are feasible but require a deep understanding of the GPU parallelism model and hardware and, thus, are highly non-trivial and complex. The algorithms are all implemented as recommended by their authors in Python, using 32-bit floating-point precision. Each algorithm uses the GPU as efficiently as possible with the CuPy library. The implementation does not utilize multi-threading explicitly. Instead, we rely on the mathematical operations of the CuPy, NumPy, or SciPy libraries and their internal parallelism.

The second evaluation is done by comparing the pISTA CPU-only implementation to the publicly available packages of SQUIC \cite{bollhofer2019large} and BIG\&QUIC \cite{cov:BIG_QUIC}.

In the first evaluation we initialize all methods using GISTA \cite{cov:GISTA} initialization where:
\begin{equation}
    A^{(0)}_{i,j} = \bigg\{\begin{array}{lr}
         (S_{i,i}+\alpha)^{-1}&  i=j\\
         0 & i\neq j
    \end{array}
\,.\end{equation}
On the second evaluation, we use GISTA \cite{cov:GISTA} initialization for pISTA only and use the default initialization implemented in the public packages themselves.

For ALM only, we consider any number which is less than or equal to \(10\cdot \epsilon_{machine}\) as zero and set those numbers to zero at the end of each iteration. For OBN we use ten inner iterations, and for ALM we use the hyperparameters used by \cite{cov:ALM} except that we set \(\mu_0=\frac{1}{\alpha}\) if \(\alpha < 0.5\). For our pISTA algorithm, we limit our linesearch over \(t\) by stopping if \(t\) is less than \(10^{-4}\). In that case, we use a step of \(t\leq\left(\frac{0.9}{cond(A^{(k)})}\right)^2\) which keeps the matrix positive definite. As proved in \cref{sec:analysis_min}, we can use a \(t<(\frac{\beta}{\gamma})^2\) where \(\beta\) and \(\gamma\) are lower and upper bounds of the eigenvalues, respectively. However, we cannot compute those bounds, so we assume that \(\left(\frac{0.9}{cond(A^{(k)})}\right)^2\) is sufficient and it should satisfy the linesearch criteria. For SQUIC, we follow the authors \cite{bollhofer2019large} and set the inverse tolerance \(\tau_{inv}\) to the same as the stopping criterion tolerance.

As a stopping criterion for all the methods, we follow \cite{cov:MultiBCD,cov:BIG_QUIC,cov:BCD} and use \(\min_z ||\partial F(A^{(k)})||_1 < \epsilon ||A^{(k)}||_1\), where \(\min_z ||\partial F(A^{(k)})||\) is the minimum sub-gradient norm, where \(\epsilon\) is set to \(10^{-2}\) and \(10^{-3}\) for the first and second evaluation, respectively.

The first evaluation's experiments were run on a machine with \textit{Intel(R) Xeon(R) Gold 6230 2.10GHz} processor, \textit{GeForce RTX 2080 Ti} GPU and \textit{Ubuntu 18.04.5} operating system, and were limited to \textit{4} cores and \textit{20GB} RAM. Also, we used \textit{Python3.9.2}, \textit{CUDA11.0}, \textit{Numpy1.20.1} and \textit{CuPy9.0}. The second evaluation's experiments were run on a machine with \textit{Intel(R) Xeon(R) Platinum 8362 2.80GHz} processor and \textit{Ubuntu 20.04.4} operating system, and were limited to \textit{128} cores and \textit{256GB} RAM. Also, We used \textit{Python3.8.10}, \textit{NumPy1.24.4} and \textit{SciPy1.10.1}.

Our full Python framework and code can be found at \newline 
\begin{center}
{\tt \url{https://github.com/GalSha/GLASSO_Framework}}.
\end{center}
\subsection{Evaluation of pISTA using the GPU Implementation}
\subsubsection{Synthetic Experiments} \label{sec:syn_gpu}
First, we evaluate the algorithms on synthetic data. We use three different types of matrices as our ground truth:
\begin{itemize}
    \item Chain graphs: as described in \cite{cov:QUIC}. The ground truth matrix \(\Sigma^{-1}\) is set to be \(\Sigma^{-1}_{i,i}=1\) and \(\Sigma^{-1}_{i,i+1}=\Sigma^{-1}_{i,i-1}=-0.5\).
    \item Graphs with random sparsity structures: as described in \cite{inexact}. We generate a sparse matrix \(U\) with non-zero elements set to be \(+1\) or \(-1\). We define the ground truth as \(\Sigma^{-1}=U^TU\) where all off diagonal elements are in \([-1,1]\). We control the number of non-zeros in \(U\) and tune it such that \(\Sigma^{-1}\) has approximately \(0.5\%\) non-zeros.
    \item Planar graphs: as described in \cite{cov:MultiBCD}. First, we create our graph \(G(V,E)\). Then, we generate \(n\) random points on the unit square to be our vertices \(V\). We use Delaunay triangulation to generate our edges \(E\). Given the graph connectivity, we define \(\Sigma^{-1}\) as its graph Laplacian, i.e., \(\Sigma^{-1}_{i,j}=-1\) if \((i,j)\in E\) and \(\Sigma^{-1}_{i,i}=d_i\) where \(d_i\) is the degree of vertex \(i\).
\end{itemize}
To ensure that the matrices are positive definite, we add a predefined diagonal term of \(\max\{-1.2\lambda_{min},10^{-1}\}\cdot I\).

We do two sets of experiments, with \(n=1,000\) and with \(n=10,000\). In each set, we draw \(3\%\cdot n\) samples and run the algorithms with two different values of \(\alpha\).
We repeat each experiment five times and show the average results in \cref{tab:results:1k} and \cref{tab:results:10k} for the first set and second set, respectively.
Also, we compare the results of a CPU-only implementation of our pISTA algorithm for both sets of experiments. The CPU counterpart implementation is implemented using the NumPy library in the same manner as the GPU implementation and relies on NumPy internal parallelism.  

In the tables, we show the average time and iterations it took for each algorithm to reach the stopping criterion. We also show the number of non-zeros in the output matrix and the minimum sub-gradient Frobenius norm.

\begin{table}
    \small\centering
    \resizebox{\textwidth}{!}{
 \begin{tabular}{ |c|c|c|c|c|c|c| } 
\hline
\multicolumn{2}{|c|}{Problem Parameters} & pISTA (GPU) & pISTA (CPU) & GISTA & OBN & ALM \\
\hline
\makecell{n\\ \(m/n\)\\ \(\alpha\)} & \makecell{\(\Sigma^{-1}\) type\\ \(|\Sigma^{-1}|_0\)} & \multicolumn{5}{|c|}{\makecell{time (iter)\\ \(\min_z||\partial F(\tilde{\Sigma}^{-1})||_F\)\\ \(||\tilde{\Sigma}^{-1}||_0\)}} \\
\hline\hline

\makecell{1,000\\ 3\%\\ 0.6} & \makecell{\(Chain\)\\ 2998} 
& \makecell{\textbf{0.04s (2.0)}\\ \textbf{0.0017}\\ 2959.2} 
& \makecell{0.35s (2.0)\\ 0.0017\\ 2959.2} 
& \makecell{0.05s (3.0)\\ 0.0275\\ 2970.4}
& \makecell{0.05s (2.0)\\ 0.0020\\ \textbf{2959.2}} 
& \makecell{0.55s (11.0)\\ 0.0312\\ 2966.0} \\
\hline
\makecell{1,000\\ 3\%\\ 0.4} & \makecell{\(Chain\)\\ 2998}  
& \makecell{\textbf{0.10s} (6.6)\\ 0.0142\\ 25307.2}
& \makecell{1.40s (6.6)\\ 0.0142\\ 25307.2} 
& \makecell{0.15s (10.6)\\ 0.0336\\ 25302.0}
& \makecell{0.11s (\textbf{4.0})\\ \textbf{0.0003}\\ 25246.0} 
& \makecell{2.47s (53.8)\\ 0.5966\\ 25246.8} \\
\hline
\makecell{1,000\\ 3\%\\ 0.6} & \makecell{\(Random\)\\ 5936} 
& \makecell{\textbf{0.04s (2.2)}\\ \textbf{0.0265}\\ 2184.0} 
& \makecell{0.32s (2.2)\\ 0.0265\\ 2184.0}
& \makecell{0.06s (3.6)\\ 0.0462\\ 2191.2}
& \makecell{0.06s (2.2)\\ 0.0824\\ 2185.2} 
& \makecell{0.61s (14.0)\\ 0.8561\\ 2209.0} \\
\hline
\makecell{1,000\\ 3\%\\ 0.4} & \makecell{\(Random\)\\ 5936}
& \makecell{\textbf{0.10s} (6.4)\\ 0.0666\\ 26335.2} 
& \makecell{1.05s (6.4)\\ 0.0666\\ 26335.2} 
& \makecell{0.21s (15.2)\\ 0.0286\\ 26400.4}
& \makecell{0.13s (\textbf{4.6})\\ \textbf{0.0005}\\ 26246.4} 
& \makecell{2.41s (53.4)\\ 2.5377\\ 26256.8} \\
\hline
\makecell{1,000\\ 3\%\\ 0.6} & \makecell{\(Planar\)\\ 6958}
& \makecell{\textbf{0.03s (2.0)}\\ \textbf{0.0008}\\ 2995.6}
& \makecell{0.31s (2.0)\\ 0.0008\\ 2995.6} 
& \makecell{0.04s (2.4)\\ 0.0269\\ 3011.6}
& \makecell{0.05s (2.0)\\ 0.0028\\ 2997.6} 
& \makecell{0.47s (10.2)\\ 0.3156\\ 3039.6} \\
\hline
\makecell{1,000\\ 3\%\\ 0.4} & \makecell{\(Planar\)\\ 6958}
& \makecell{0.24s (15.4)\\ 0.0244\\ 28495.2}
& \makecell{3.05s (15.6)\\ 0.0244\\ 28471.6} 
& \makecell{0.28s (20.2)\\ 0.0241\\ 28623.2}
& \makecell{\textbf{0.15s (5.6)}\\ \textbf{0.0130}\\ 28290.4} 
& \makecell{3.13s (69.6)\\ 1.8171\\ 28312.8} \\
\hline
\end{tabular}
}
   \caption{Results for \(1,000\times1,000\) precision matrix\\\small{Bold text marks the best results}}
    \label{tab:results:1k}
\end{table}

\begin{table}[h!]
    \small\centering
    \resizebox{\textwidth}{!}{
 \begin{tabular}{ |c|c|c|c|c|c|c| } 
\hline
\multicolumn{2}{|c|}{Problem Parameters} & pISTA (GPU) & pISTA (CPU) & GISTA & OBN & ALM \\
\hline
\makecell{n\\ \(m/n\)\\ \(\alpha\)} & \makecell{\(\Sigma^{-1}\) type\\ \(|\Sigma^{-1}|_0\)} & \multicolumn{5}{|c|}{\makecell{time (iter)\\ \(\min_z||\partial F(\tilde{\Sigma}^{-1})||_F\)\\ \(||\tilde{\Sigma}^{-1}||_0\)}} \\
\hline\hline

\makecell{10,000\\ 3\%\\ 0.4} & \makecell{\(Chain\)\\ 29,998}
& \makecell{\textbf{5.69s (3.0)}\\ 0.0101\\ 34685.2} 
& \makecell{249.18s (3.0)\\ 0.0101\\ 34685.2} 
& \makecell{10.80s (6.0)\\ 0.1478\\ 35104.0}
& \makecell{13.85s (3.0)\\ \textbf{0.0060}\\ 34839.2} 
& \makecell{103.44s (28.6)\\ 28.5176\\ 34879.6} \\
\hline
\makecell{10,000\\ 3\%\\ 0.2} & \makecell{\(Chain\)\\ 29,998}
& \makecell{\textbf{12.89s (7.0)}\\ 0.0951\\ 97765.6} 
& \makecell{578.37s (7.0)\\ 0.0951\\ 97765.6}
& \makecell{27.53s (16.6)\\ 0.8418\\ 98432.8}
& \makecell{20.02s (4.0)\\ \textbf{0.0129}\\ 97881.2} 
& \makecell{168.46s (48.0)\\ 24.0052\\ 98339.2} \\
\hline
\makecell{10,000\\ 3\%\\ 0.4} & \makecell{\(Random\)\\ 508,787}
& \makecell{\textbf{1.56s} (0.6)\\ \textbf{2.1558}\\ 10823.6} 
& \makecell{68.74s (0.6)\\ 2.1558\\ 10823.6}
& \makecell{1.85s (0.6)\\ 3.0901\\ 10823.6}
& \makecell{1.60s (0.6)\\ \textbf{2.1558}\\ 10823.6} 
& \makecell{767.79s (237.4)\\ 44.2140\\ 11072.0} \\
\hline
\makecell{10,000\\ 3\%\\ 0.2} & \makecell{\(Random\)\\ 508,787}
& \makecell{\textbf{5.67s (3.0)}\\ 4.3416\\ 140699.6} 
& \makecell{263.86s (3.0)\\ 4.3416\\ 140699.6} 
& \makecell{15.95s (9.0)\\ \textbf{0.8136}\\ 140869.6}
& \makecell{13.97s (3.0)\\ 1.0047\\ 140715.6} 
& \makecell{153.16s (44.2)\\ 21.1711\\ 141009.8} \\
\hline
\makecell{10,000\\ 3\%\\ 0.4} & \makecell{\(Planar\)\\ 69,949}
& \makecell{\textbf{3.92s (2.0)}\\ 1.2172\\ 63249.6} 
& \makecell{174.70s (2.0)\\ 1.2172\\ 63249.6} 
& \makecell{8.84s (5.0)\\ \textbf{0.0477}\\ 63446.8}
& \makecell{13.73s (3.0)\\ 0.1113\\ 63324.0} 
& \makecell{100.34s (28.2)\\ 6.6973\\ 63572.8} \\
\hline
\makecell{10,000\\ 3\%\\ 0.2} & \makecell{\(Planar\)\\ 69,949}
& \makecell{\textbf{20.91s (10.2)}\\ 1.4050\\ 194261.6} 
& \makecell{943.43s (10.2)\\ 1.4049\\ 194261.2} 
& \makecell{63.17s (35.8)\\ \textbf{0.5368}\\ 195714.0}
& \makecell{41.54s (7.6)\\ 1.3284\\ 192740.0} 
& \makecell{269.97s (79.4)\\ 12.4953\\ 193075.0}\\
\hline
\end{tabular}
}
   \caption{Results for \(10,000\times10,000\) precision matrix\\\small{Bold text marks the best results}}
    \label{tab:results:10k}
\end{table}

From \cref{tab:results:1k}, we see that pISTA outperforms every other algorithm with respect to running time on all the low-dimension matrices except one. We notice that pISTA requires fewer iterations than GISTA and more iterations than OBN. This is expected as GISTA is a first-order method, OBN is a second-order method, and the pISTA algorithm uses a relaxed second-order approximation. Thus, pISTA convergence rate should be between GISTA and OBN. For small values of \(\alpha\), OBN achieves a better minimal sub-gradient, however, the difference is negligible.
Also, the GPU implementation of pISTA is much faster than its CPU counterpart, proving the desirability of GPU-centric algorithms. 

In \cref{tab:results:10k}, we see similar results. pISTA outperforms every other algorithm with respect to running time on all matrices.  Also, as expected, pISTA requires fewer iterations than GISTA and more iterations than OBN. For high-dimension matrices, the speedup of the GPU over the CPU is more significant, making it extremely valuable in higher dimensions.

\begin{figure}[h]\begin{minipage}{\textwidth}
\renewcommand\footnoterule{}     % ELIMINATE LITTLE LINE SEPARATER
\centering
\begin{minipage}{.45\linewidth}
\centering
\subfloat[Chain precision matrix]{\label{fig:chn}\begin{tikzpicture}
\begin{semilogyaxis}[ymin=1e-4, height=.8\linewidth, grid=major,
xlabel={Iteration}, ylabel={\(F(\tilde{\Sigma}^{-1})-{min}F(A)\)}]
\addplot table [x=Iter,y=pISTA] {chain0.2.dat};
\addplot table [x=Iter,y=GISTA] {chain0.2.dat};
\addplot table [x=Iter,y=OBN] {chain0.2.dat};
\addplot table [x=Iter,y=ALM] {chain0.2.dat};
\end{semilogyaxis}
\end{tikzpicture}}
\end{minipage}
\begin{minipage}{.45\linewidth}
\centering
\subfloat[Random precision matrix]{\label{fig:rnd}\begin{tikzpicture}
\begin{semilogyaxis}[ymin=1e-4, height=.8\linewidth, grid=major,
xlabel={Iteration}, ylabel={\(F(\tilde{\Sigma}^{-1})-{min}F(A)\)}]
\addplot table [x=Iter,y=pISTA] {random0.2.dat};
\addplot table [x=Iter,y=GISTA] {random0.2.dat};
\addplot table [x=Iter,y=OBN] {random0.2.dat};
\addplot table [x=Iter,y=ALM] {random0.2.dat};
\end{semilogyaxis}
\end{tikzpicture}}
\end{minipage}\par\medskip
\centering
\begin{minipage}{.45\linewidth}
\centering
\subfloat[Planar precision matrix]{\label{fig:pln}\begin{tikzpicture}
\begin{semilogyaxis}[ymin=1e-4, height=.8\linewidth, grid=major,
xlabel={Iteration}, ylabel={\(F(\tilde{\Sigma}^{-1})-{min}F(A)\)},
legend entries={pISTA,GISTA,OBN,ALM}, legend style={legend columns=-1}, legend to name = plot_legend]
\addplot table [x=Iter,y=pISTA] {planar0.2.dat};
\addplot table [x=Iter,y=GISTA] {planar0.2.dat};
\addplot table [x=Iter,y=OBN] {planar0.2.dat};
\addplot table [x=Iter,y=ALM] {planar0.2.dat};
\end{semilogyaxis}
\end{tikzpicture}}
\end{minipage}\par\medskip
\ref{plot_legend}
\captionsetup{justification=centering,margin=2cm}
\caption{Semilog plot of \(F(\tilde{\Sigma}^{-1})-{min}F(A)\) at each iteration\protect\footnote{\centering The iterations are not equal in complexity or time between the algorithms} for \(10,000\times10,000\) matrix and \(\alpha=0.2\)}
\vspace{-4ex}
% SHIFT FOOTNOTE UP
\label{fig:syn_plot}
\end{minipage}\end{figure}

In \cref{fig:syn_plot}, we show a semi-log plot of \(F(\tilde{\Sigma}^{-1})-{min}F(A)\) as a function of the iteration, however, it is important to note that the iterations are not equal in complexity or required time. We only present the plots of the \(10,000\times10,000\) precision matrices for \(\alpha=0.2\) because of space consideration. We define \({min}F(A)\) as the minimum value achieved for \(F\) among all the algorithms and iterations in that experiment. The plots show what we expect, pISTA, which uses relaxed second-order approximation, achieves a convergence rate between linear (GISTA) and quadratic (OBN). Moreover, we see that the convergence rate is quadratic in the first few iterations.

\subsubsection{Real World Data Experiments}
For real-world data we use gene expression
data sets that are available at the Gene Expression Omnibus \url{http://www.ncbi.nlm.nih.gov/geo/}. We preprocess the data
to have zero mean and unit variance for each variable, i.e., \(diag(S) = I\). \cref{tab:results:gene} shows the results for data sets of various sizes, including the name codes of the data sets used.

\begin{table}
\small\centering
\resizebox{\textwidth}{!}{
 \begin{tabular}{ |c|c|c|c|c|c| } 
\hline
\multicolumn{2}{|c|}{Problem Parameters} & pISTA & GISTA & OBN & ALM \\
\hline
Data set & \makecell{n\\ \(m/n\)\\ \(\alpha\)} & \multicolumn{4}{|c|}{\makecell{time (iter)\\ \(\min_z||\partial F(\tilde{\Sigma}^{-1})||_F\)\\ \(||\tilde{\Sigma}^{-1}||_0\)}} \\
\hline\hline

GSE-3016 & \makecell{1322\\ 4.77\% (63)\\ 0.85} 
& \makecell{\textbf{0.09s (3)}\\ 0.1915\\ 3478} 
& \makecell{0.11s (3)\\ \textbf{0.0378}\\ 3560} 
& \makecell{0.16s (3)\\ 0.2185\\ 3490} 
& \makecell{1.01s (18)\\ 0.2921\\ 3596} \\
\hline

GSE-3016 & \makecell{1322\\ 4.77\% (63)\\ 0.75} 
& \makecell{\textbf{0.14s (5)}\\ 0.0092\\ 10796} 
& \makecell{0.35s (14)\\ 0.0049\\ 10890}
& \makecell{0.32s (6)\\ \textbf{0.0025}\\ 10806}  
& \makecell{2.06s (39)\\ 8.5051\\ 10809} \\
\hline

GSE-3016 & \makecell{1322\\ 4.77\% (63)\\ 0.65} 
& \makecell{1.02s (33)\\ 0.0134\\ 20682} 
& \makecell{1.55s (66)\\ 0.0158\\ 20660} 
& \makecell{\textbf{0.66s (11)}\\ 0.0016\\ 20386} 
& \makecell{3.17s (59)\\ \textbf{0.0011}\\ 20440} \\
\hline

GSE-26242 & \makecell{1536\\ 6.25\% (96)\\ 0.85} 
& \makecell{\textbf{0.08s (2)}\\ 0.0118\\ 4876} 
& \makecell{0.15s (4)\\ 0.0186\\ 4962} 
& \makecell{0.14s (2)\\ 0.0103\\ 4862} 
& \makecell{1.67s (26)\\ \textbf{0.0081}\\ 4912} \\
\hline

GSE-26242 & \makecell{1536\\ 6.25\% (96)\\ 0.75} 
& \makecell{\textbf{0.45s} (14)\\ 0.0003\\ 13896}  
& \makecell{0.70s (24)\\ 0.0056\\ 13900} 
& \makecell{0.47s \textbf{(7)}\\ \textbf{0.0001}\\ 13798}
& \makecell{2.39s (40)\\ 3.5156\\ 13810} \\
\hline

GSE-26242 & \makecell{1536\\ 6.25\% (96)\\ 0.65} 
& \makecell{1.81s (32)\\ 0.0096\\ 28494} 
& \makecell{2.43s (83)\\ 0.0148\\ 28942} 
& \makecell{\textbf{1.01s (13)}\\ \textbf{0.0087}\\ 28148} 
& \makecell{4.28s (66)\\ 6.7166\\ 28182} \\
\hline

GSE-7039 & \makecell{6264\\ 3.99\% (250)\\ 0.9}
& \makecell{\textbf{2.31s (4)}\\ 0.0397\\ 34286} 
& \makecell{8.52s (15)\\ 0.0798\\ 34364} 
& \makecell{5.92s (4)\\ \textbf{0.0014}\\ 34250}
& \makecell{74.19s (77)\\ 33.9381\\ 34266} \\
\hline

GSE-7039 & \makecell{6264\\ 3.99\% (250)\\  0.8} 
& \makecell{10.42s (14)\\ 0.0566\\ 49200} 
& \makecell{39.91s (69)\\ 0.1218\\ 50020} 
& \makecell{\textbf{9.39s (6)}\\ \textbf{0.0449}\\ 49054}
& \makecell{*s ($>$1000)\\ *\\ *} \\
\hline

GSE-7039 & \makecell{6264\\ 3.99\% (250)\\  0.7} 
& \makecell{41.87s (41)\\ 0.0206\\ 70700} 
& \makecell{101.95s (166)\\ 0.1302\\ 70820} 
& \makecell{\textbf{16.39s (10)}\\ \textbf{0.0012}\\ 70572} 
& \makecell{*s ($>$1000)\\ *\\ *} \\
\hline

GSE-52076 & \makecell{11064\\ 5.39\% (596)\\ 0.9} 
& \makecell{\textbf{9.78s (4)}\\ \textbf{0.0027}\\ 32840} 
& \makecell{30.65s (13)\\ 0.1908\\ 32850}
& \makecell{Out of GPU memory}  
& \makecell{103.10s (23)\\ 23.4233\\ 32990}\\
\hline

GSE-52076 & \makecell{11064\\ 5.39\% (596)\\ 0.8} 
& \makecell{\textbf{10.05s (4)}\\ \textbf{0.1162}\\ 42852} 
& \makecell{55.54s (23)\\ 0.2827\\ 43086}
& \makecell{Out of GPU memory}  
& \makecell{168.27s (39)\\ 71.0796\\ 43471}\\
\hline

GSE-52076 & \makecell{11064\\ 5.39\% (596)\\ 0.7} 
& \makecell{\textbf{17.34s (7)}\\ 0.3950\\ 49612}
& \makecell{167.93s (65)\\ \textbf{0.3866}\\ 49644}
& \makecell{Out of GPU memory}  
& \makecell{317.99s (75)\\ 58.4818\\ 49840}\\
\hline
\end{tabular}
}
   \caption{Results for real-world data set\\\small{Bold text marks the best results}}
    \label{tab:results:gene}
\end{table}

In \cref{tab:results:gene}, we see that there is no silver bullet. There are some problems where pISTA will outshine and there are some problems where OBN will outshine. However, OBN requires much more memory than pISTA, making it hard to use in high dimensions on GPUs.

\subsection{Evaluation of pISTA using the CPU Implementation}
As in \cref{sec:syn_gpu}, we evaluate the algorithms on synthetic data, and we use \textit{Planar graphs} and \textit{Graphs with random sparsity structures} as our ground truth. Similarly, to ensure that the matrices are positive definite, we add a predefined diagonal term of \(\max\{-1.2\lambda_{min},10^{-1}\}\cdot I\).

We do two sets of experiments, with \(n=4,000\) and with \(n=8,000\). In each set we draw \(3\%\cdot n\) samples and run the algorithms with two different values of \(\alpha\).
We repeat each experiment five times and show the average results in \cref{tab:results:48k}.
As said, we compare the results to a CPU-only implementation of our pISTA algorithm. However, we also compare the results to \(64\)-bit floating precision pISTA because the public packages are all implemented using \(64\)-bit floating precision.

\begin{table}
    \small\centering
    \resizebox{\textwidth}{!}{
 \begin{tabular}{ |c|c|c|c|c|c| } 
\hline
\multicolumn{2}{|c|}{Problem Parameters} & pISTA 32 bit (CPU) & pISTA 64 bit (CPU) & SQUIC (CPU) & BIG\&QUIC (CPU)\\
\hline
\makecell{n\\ \(m/n\)\\ \(\alpha\)} & \makecell{\(\Sigma^{-1}\) type\\ \(|\Sigma^{-1}|_0\)} & \multicolumn{4}{|c|}{\makecell{time (iter)\\ \(\min_z||\partial F(\tilde{\Sigma}^{-1})||_F\)\\ \(||\tilde{\Sigma}^{-1}||_0\)}} \\
\hline\hline

\makecell{4,000\\ 3\%\\ 0.5} & \makecell{\(Random\)\\ 83,701} 
& \makecell{4.35s (\textbf{2.0})\\ \textbf{0.0238}\\ 4890.4} 
& \makecell{6.04s (\textbf{2.0})\\\textbf{0.0238}\\ 4890.4} 
& \makecell{\textbf{0.49s} (4.4)\\ 0.0683\\ 4890.4} 
& \makecell{3.44s (3.6)\\ 0.0425\\ 4889.6} \\
\hline
\makecell{4,000\\ 3\%\\ 0.25} & \makecell{\(Random\)\\ 83,701}
& \makecell{\textbf{15.51s} (7.2)\\ 0.0282\\ 123622.0} 
& \makecell{23.41s (7.2)\\ 0.0282\\ 123622.0} 
& \makecell{29.18s (\textbf{5.8})\\ \textbf{0.0059}\\ 124274.0} 
& \makecell{115.48s (6.4)\\ 0.0064\\ 123592.0} \\
\hline
\makecell{4,000\\ 3\%\\ 0.5} & \makecell{\(Planar\)\\ 27,953}
& \makecell{4.34s (\textbf{2.0})\\ \textbf{0.0014}\\ 11822.0}
& \makecell{6.06s (\textbf{2.0})\\ \textbf{0.0014}\\ 11822.0}
& \makecell{\textbf{0.62}s (5.4)\\ 0.0107\\ 11831.6} 
& \makecell{9.18s (4.6)\\ 0.0139\\ 11822.8} \\
\hline
\makecell{4,000\\ 3\%\\ 0.25} & \makecell{\(Planar\)\\  27,953}
& \makecell{\textbf{49.38s} (19.8)\\ 0.0032\\ 131574.0}
& \makecell{79.49s (19.6)\\ 0.0031\\ 131592.0}
& \makecell{73.82s (6.8)\\ 0.0023\\ 131549.2}
& \makecell{173.59s (\textbf{6.2})\\ \textbf{0.0011}\\ 131200.8} \\
\hline
\makecell{8,000\\ 3\%\\ 0.4} & \makecell{\(Random\)\\ 327,450} 
& \makecell{20.99s (\textbf{2.0})\\ \textbf{0.0178}\\ 9808.4} 
& \makecell{29.04s (\textbf{2.0})\\ \textbf{0.0178}\\ 9808.4} 
& \makecell{\textbf{0.73s} (4.4)\\ 0.1022\\ 9810.0}
& \makecell{21.07s (3.8)\\ 0.1518\\ 9808.0} \\
\hline
\makecell{8,000\\ 3\%\\ 0.2} & \makecell{\(Random\)\\ 327,450}
& \makecell{\textbf{58.37s} (\textbf{5.4})\\ 0.0337\\ 216172.4} 
& \makecell{81.80s (\textbf{5.4})\\ 0.0319\\ 216165.6} 
& \makecell{105.65s (8.2)\\ \textbf{0.0200}\\ 216735.6} 
& \makecell{463.94s (6.2)\\ 0.0777\\ 216148.0} \\
\hline
\makecell{8,000\\ 3\%\\ 0.4} & \makecell{\(Planar\)\\ 55,945}
& \makecell{35.02s (\textbf{3.4})\\ 0.0387\\ 50227.6}
& \makecell{47.22s (\textbf{3.4})\\ 0.0387\\ 50227.6}
& \makecell{\textbf{1.52s} (6.0)\\ \textbf{0.0009}\\ 50292.8} 
& \makecell{71.43s (4.0)\\ 0.0020\\ 50247.2} \\
\hline
\makecell{8,000\\ 3\%\\ 0.2} & \makecell{\(Planar\)\\ 55,945}
& \makecell{327.07s (21.2)\\ 0.0249\\ 223343.2}
& \makecell{360.49s (18.6)\\ 0.0285\\ 223335.2}
& \makecell{\textbf{161.60s} (7.0)\\ 0.0049\\ 223800.0}
& \makecell{687.39s (\textbf{6.0})\\ \textbf{0.0032}\\ 223050.0} \\
\hline
\end{tabular}
}
   \caption{Results for public packages evaluation\\\small{Bold text marks the best results}}
    \label{tab:results:48k}
\end{table}

In \cref{tab:results:48k}, we see that the cases where \(\alpha\) is small are better solved (in terms of time) by our pISTA algorithm, and the cases where \(\alpha\) is large are better solved by SQUIC. These are cases where the obtained matrix is sparser, so exploiting the sparsity is more beneficial. Moreover, in all cases except one, pISTA is better than BIG\&QUIC. However, it is important to note that those results are of CPU-only implementation. It is safe to assume, as shown on \cref{sec:syn_gpu}, that GPU implementation of pISTA will show better results by a notable margin. In any case, the results shown on \cref{tab:results:48k} reinforce the assertion that there is no silver bullet.

\subsection{The Influence of the Sample Ratio and \(\alpha\) Parameter}
In all of the previous experiments, we chose \(\alpha\) parameter such that it will create different sparsity levels in the final result. Choosing the best \(\alpha\), in terms of ground truth estimation, is dependent on both the actual ground truth and the number of samples available.  

We evaluate the effect of different values of \(\alpha\) and different numbers of samples on synthetic data with \(n=3,000\). We use the \textit{Chain}, \textit{Planar}, and \textit{Random} graphs as ground truth. As done in our previous experiments, to ensure that the matrices are positive definite, we add a predefined diagonal term of \(\max\{-1.2\lambda_{min},10^{-1}\}\cdot I\). We ran the experiments with the same stopping criteria as previously defined with \(\epsilon=10^{-2}\).

We repeat each experiment five times and show the average Matthews correlation coefficient achieved in \cref{fig:syn_mat}. Matthews correlation coefficient (MCC) or Phi coefficient is used to measure the quality of the estimated sparse pattern, and it is defined as:
\begin{equation}
    \text{MCC} = \dfrac{TP \times TN - FP \times FN}{\sqrt{(TP + FP)(TP + FN)(TN + FP)(TN + FN)}}
,\end{equation}
where $TP$ is the number of true positives, $TN$ the number of true negatives, $FP$ the number of false positives and $FN$ the number of false negatives.

\begin{figure}
    \centering
    \begin{minipage}[t]{.33\textwidth}
        \centering
\subfloat[Chain precision matrix]{\label{mat:chn}
\includegraphics[width=0.9\textwidth]{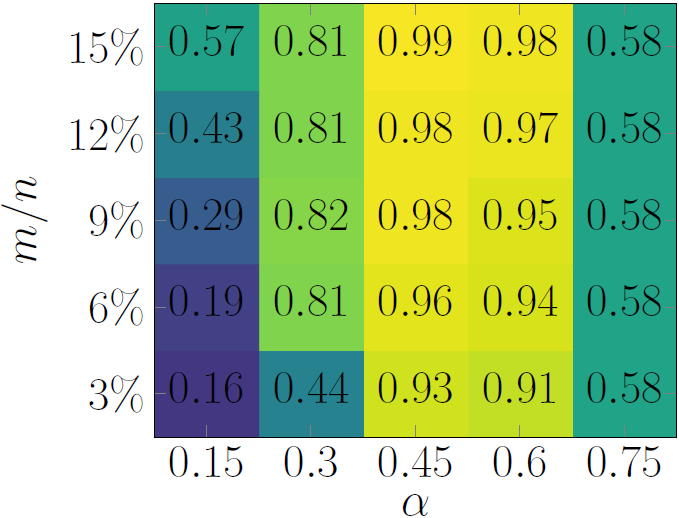}
\iffalse
\scalebox{0.44}{
\begin{tikzpicture}
        \begin{axis}[
            view={0}{90},   % not needed for `matrix plot*' variant
            xlabel=\LARGE\(\alpha\),
           ylabel=\LARGE\(m/n\),
            colormap/viridis,
            %colorbar,
            %title=$\mathbf{A}_{\mathrm{dom}}$,
           enlargelimits=false,
            axis on top,
            point meta min=0,
            point meta max=1,
            xtick = {0.15, 0.3, 0.45, 0.6, 0.75},
            ytick = {3, 6, 9, 12, 15},
            yticklabel={\LARGE$\pgfmathprintnumber{\tick}\%$},
            xticklabel={\LARGE$\pgfmathprintnumber{\tick}$},
            nodes near coords={\pgfmathprintnumber\pgfplotspointmeta},
            every node near coord/.append style={xshift=0pt,yshift=-7pt, black, font=\LARGE},
        ]
            \addplot [matrix plot*,point meta=explicit] file [meta=index 2] {chain_mat.dat};
        \end{axis}
\end{tikzpicture}}
\fi
}
    \end{minipage}\hfill
    \begin{minipage}[t]{.33\textwidth}
        \centering
\subfloat[Random precision matrix]{\label{mat:rnd}
\includegraphics[width=0.9\textwidth]{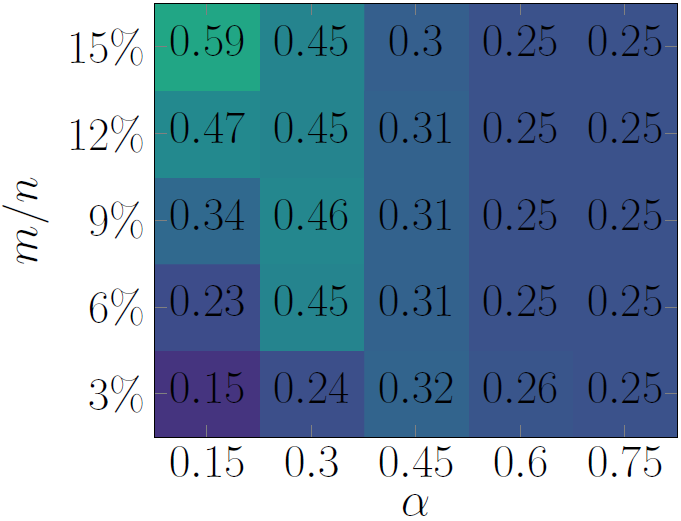}
\iffalse
\scalebox{0.44}{\begin{tikzpicture}
        \begin{axis}[
            view={0}{90},   % not needed for `matrix plot*' variant
            xlabel=\LARGE\(\alpha\),
            ylabel=\LARGE\(m/n\),
            colormap/viridis,
            %colorbar,
            %title=$\mathbf{A}_{\mathrm{dom}}$,
            enlargelimits=false,
            axis on top,
            point meta min=0,
            point meta max=1,
            xtick = {0.15, 0.3, 0.45, 0.6, 0.75},
            ytick = {3, 6, 9, 12, 15},
            yticklabel={\LARGE$\pgfmathprintnumber{\tick}\%$},
            xticklabel={\LARGE$\pgfmathprintnumber{\tick}$},
            nodes near coords={\pgfmathprintnumber\pgfplotspointmeta},
            every node near coord/.append style={xshift=0pt,yshift=-7pt, black, font=\LARGE},
        ]
            \addplot [matrix plot*,point meta=explicit] file [meta=index 2] {random_mat.dat};
        \end{axis}
\end{tikzpicture}}
\fi
}
    \end{minipage}\hfill
    \begin{minipage}[t]{.33\textwidth}
        \centering
\subfloat[Planar precision matrix]{\label{mat:pln}
\includegraphics[width=0.9\textwidth]{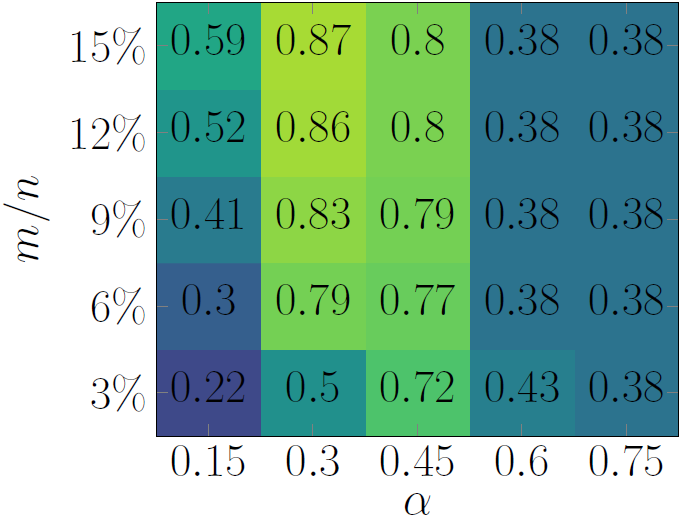}
\iffalse
\scalebox{0.44}{\begin{tikzpicture}
        \begin{axis}[
            view={0}{90},   % not needed for `matrix plot*' variant
            xlabel=\LARGE\(\alpha\),
            ylabel=\LARGE\(m/n\),
            colormap/viridis,
            %colorbar,
            %title=$\mathbf{A}_{\mathrm{dom}}$,
            enlargelimits=false,
            axis on top,
            point meta min=0,
            point meta max=1,
            xtick = {0.15, 0.3, 0.45, 0.6, 0.75},
            ytick = {3, 6, 9, 12, 15},
            yticklabel={\LARGE$\pgfmathprintnumber{\tick}\%$},
            xticklabel={\LARGE$\pgfmathprintnumber{\tick}$},
            nodes near coords={\pgfmathprintnumber\pgfplotspointmeta},
            every node near coord/.append style={xshift=0pt,yshift=-7pt, black, font=\LARGE},
        ]
            \addplot [matrix plot*,point meta=explicit] file [meta=index 2] {planar_mat.dat};
        \end{axis}
\end{tikzpicture}}
\fi
}
    \end{minipage}\hfill
    \caption{\centering Matthews correlation coefficient as function of \(\alpha\) and number of samples for \(3000\times3000\) precision matrix}
    \label{fig:syn_mat}
\end{figure}

According to the results shown on \cref{fig:syn_mat}, the best value of \(\alpha\) is different on each type of ground truth. Also, as expected, as the number of samples increases, the quality of the sparsity pattern reconstruction is generally better. 

\section{Conclusions}
\label{sec:conclusions}
In this work we presented a method for solving sparse inverse covariance estimation problems. Our method creates simplified approximate second-order optimization where the different variables' dependencies can be guessed in a simple manner. Moreover, the method is designed to be implemented using matrix operations which can be done on a GPU, allowing us to solve the problem efficiently. We showed the desirability of GPU implementations and that our pISTA method has better results for various problem structures.

\newpage
\appendix
\appendixnotitle
%\section{Supplementary material} 
\label{appendix:d_sol}
\subsection{General Solution for Sign Derivative Equation}

Define \(\mathcal{T}(x)\):
\begin{gather*} \label{T}
    x\neq0:\quad\quad \mathcal{T}(x)=sign(x)\\
    x=0:\quad\quad \mathcal{T}(x)\in[-1,1]
\,.\end{gather*}

\begin{lemma} \label{appendix:sol_1d_all}
The solution for:
\begin{equation}
    x+b+c\mathcal{T}(x+a)=0,\quad c>0
\end{equation}
is given by:
\begin{equation}
       x = \left\{\begin{array}{lr}
         -b+c & c < b-a\\
         -a & c > |b-a|\\ 
         -b-c & -c > b-a
    \end{array}\right. = -a + \text{SoftThreshold}(a-b, c)
\,.\end{equation}
\end{lemma}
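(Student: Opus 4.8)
The plan is to recognize the equation as the optimality condition of the scalar $\ell_1$‑regularized least squares problem (whose minimizer is the soft-threshold), and then confirm the closed form by a direct case analysis. First I would substitute $y = x+a$, so that $x + b + c\,\mathcal{T}(x+a) = 0$ becomes $y + (b-a) + c\,\mathcal{T}(y) = 0$, i.e. $0 \in y - (a-b) + c\,\mathcal{T}(y)$. This is exactly the condition characterizing $y = \text{SoftThreshold}(a-b,c)$, so undoing the substitution gives $x = -a + \text{SoftThreshold}(a-b,c)$. To keep the appendix self-contained, however, I would not rely on proximal-operator theory but instead derive the three-branch formula from scratch.

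Second, I would split into three cases according to the sign of $x+a$. If $x+a>0$, then $\mathcal{T}(x+a)=1$ and the equation is linear: $x+b+c=0$, so $x=-b-c$; substituting back, the hypothesis $x+a>0$ is equivalent to $a-b>c$, i.e. $-c>b-a$. If $x+a<0$, then $\mathcal{T}(x+a)=-1$, so $x=-b+c$, and $x+a<0$ is equivalent to $c<b-a$. If $x+a=0$, then $x=-a$, and one needs $-a+b+c\,t=0$ for some $t\in[-1,1]$, i.e. $|a-b|\le c$. Thus each branch of the statement is precisely the set of parameters (with $c>0$) for which the corresponding sign regime is consistent, and the displayed value of $x$ is the one forced in that regime.

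Third, I would verify exhaustiveness and consistency: since $c>0$, the quantity $b-a$ lies in exactly one of $(c,\infty)$, $[-c,c]$, $(-\infty,-c)$, with the endpoints $\pm c$ shared between two branches; on those shared boundaries the two candidate formulas agree (for instance, at $b-a=c$ both $-b+c$ and $-a$ equal $-a$). Hence a solution always exists and the piecewise definition is unambiguous. Finally I would unfold $\text{SoftThreshold}(a-b,c) = sign(a-b)\max(|a-b|-c,0)$ and match its three pieces, after adding $-a$, against $-b+c$, $-a$, and $-b-c$, confirming the claimed identity in every case.

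There is no genuine obstacle here. The only points requiring care are getting the direction of the inequalities right when translating the sign hypotheses on $x+a$ into conditions on $a,b,c$, and treating the case $x+a=0$ as a set inclusion (so the threshold values $c=|b-a|$ fall consistently into a branch rather than being left out).
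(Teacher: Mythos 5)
Your proof is correct and takes essentially the same route as the paper's: a case analysis on the sign of $x+a$, solving the resulting linear equation in each regime and translating the consistency requirement into the inequalities $-c>b-a$, $c<b-a$, and $|a-b|\le c$. (Incidentally, your treatment of the $x+a=0$ case correctly yields $c\ge|a-b|$, whereas the paper's proof states this inequality in the reversed direction, evidently a typo; your explicit verification of the SoftThreshold identity is a small addition the paper leaves implicit.)
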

\begin{proof}
First, consider the case where \(x>-a\Rightarrow\mathcal{T}(x+a)=1\):
\begin{gather*}
\left\{\begin{array}{lr}
        x+b+c = 0\\
        x > -a\\
        \end{array}\right.
\Rightarrow \left\{\begin{array}{lr}
        x = -b-c\\
        x > -a\\
        \end{array}\right.
\Rightarrow \left\{\begin{array}{lr}
        x = -b-c\\
        -b-c > -a\\
        \end{array}\right.
\Rightarrow \left\{\begin{array}{lr}
        x = -b-c\\
        -c > b-a\\
        \end{array}\right.
\,.\end{gather*}

In a similar way, for \(x<-a\Rightarrow\mathcal{T}(x+a)=-1\) and we get:
\begin{gather*}
\left\{\begin{array}{lr}
        x = -b+c\\
        c < b-a\\
        \end{array}\right.
\,.\end{gather*}
Lastly, consider the case that \(x=-a\), then since \(c > 0\) we get:
\begin{equation*}
    0\in -a+b+c\cdot t,\;\;t\in[-1,1]\\
\end{equation*}
which leads to 
\begin{gather*}
    a-b \in [-c,c] \Rightarrow c \leq |a-b| \Rightarrow c \leq |b-a|
\,.\end{gather*}

\end{proof}

\subsection{Approximate Solution for the pISTA Sub-gradient Equation}
Consider the following problem:
\begin{multline} 
    D:\quad\quad 0\in\Bigg\{
    \frac{1}{t}D +
    A\left(g \odot \mathcal{M}_A\right)A+\alpha A\left(\mathcal{G}\odot \mathcal{M}_A\right)A \\\Bigg| \begin{array}{lr}
         \mathcal{G}_{i,j}=[sign(A+\mathcal{M}_A \odot D)]_{i,j}&  [A+\mathcal{M}_A \odot D]_{i,j}\neq 0\\
         \mathcal{G}_{i,j}\in[-1,1]& [A+\mathcal{M}_A \odot D]_{i,j}=0
    \end{array} \Bigg\}
\,.\end{multline}
Let us split the equation into \(n^2\) scalar equations where the \((i,j)\)-th equation is composed of the elements with indices \((i,j)\). Denote the equation variable \(D_{i,j}=x\). The \((i,j)\) equation is:
\begin{equation} \label{appendix:g_1d_eq}
    \frac{1}{t}x+\left[A\left(g \odot \mathcal{M}_A\right)A\right]_{i,j}+\alpha\left[ A\left(\mathcal{G}\odot \mathcal{M}_A\right)A\right]_{i,j}=0
\,,\end{equation}
where \(\mathcal{G}_{i,j}=\mathcal{T}\left(A_{i,j}+[\mathcal{M}_A]_{i,j}\cdot x\right)\). As we consider equations where \([\mathcal{M}_A]_{i,j}\neq0\) (the rest are zeros), we can use  \(\mathcal{G}_{i,j}=\mathcal{T}\left(A_{i,j}+x\right)\). Define:
\begin{equation}\label{eq:G_comp}
    \mathcal{G}^{-i,-j}_{k,l}=\left\{\begin{array}{lr}
        0 & k=i, l=j\\
        0 & k=j, l=i\\
        \mathcal{G}_{k,l} & otherwise
    \end{array} \right.
\,.\end{equation}
We can write \cref{appendix:g_1d_eq} as:
\begin{equation} \label{appendix:g_1d_eq_2}
    \frac{1}{t}x+\left[A\left(g \odot \mathcal{M}_A\right)A\right]_{i,j}+\alpha\left[ A\left(\mathcal{G}^{-i,-j}\odot \mathcal{M}_A\right)A\right]_{i,j}+C_{i,j}\mathcal{T}\left(A_{i,j}+x\right)=0
\,,\end{equation}
where\begin{equation} \label{appendix:eq:c}
    C_{i,j}=\left\{\begin{array}{lc}
        \alpha\cdot\left(A_{i,i}\cdot A_{j,j}\right), & i= j\\
        \alpha\cdot\left(A_{i,i}\cdot A_{j,j}+A_{i,j}\cdot A_{j,i}\right), & i\neq j
    \end{array}\right.
\,\end{equation}
is the diagonal entry of the Kronecker matrix \(\alpha\cdot A\otimes A\) corresponding to the entry \((i,j)\). Note that \(C_{i,j} > 0\) since \(\alpha > 0\) and \(A^{(k)}\) is symmetric positive definite, and its diagonal is strictly positive. According to \cref{appendix:sol_1d_all}, the solution to \cref{appendix:g_1d_eq_2} is:
\begin{align*}
\label{eq:scalar_sol}
       x = -A_{i,j} +\text{SoftThreshold}\Big(A_{i,j}-&t\cdot\Big(\left[A\left(g \odot \mathcal{M}_A\right)A\right]_{i,j}\\
       +&\alpha\left[ A\left(\mathcal{G}^{-i,-j}\odot \mathcal{M}_A\right)A\right]_{i,j}\Big), t\cdot C_{i,j}\Big)
       \,.
\end{align*}
Notice that:
\begin{eqnarray}
    \alpha\left[ A\left(\mathcal{G}^{-i,-j}\odot \mathcal{M}_A\right)A\right]_{i,j} &=& \alpha\left[ A\left(\mathcal{G}\odot \mathcal{M}_A\right)A\right]_{i,j} - C_{i,j} \cdot \mathcal{G}_{i,j}\cdot[\mathcal{M}_A]_{i,j} \\ \nonumber&=&\left[ \alpha A\left(\mathcal{G}\odot \mathcal{M}_A\right)A - C \odot  \left(\mathcal{G}\odot \mathcal{M}_A\right)\right]_{i,j}
    \,,
\end{eqnarray}
where \(\odot\) is the Hadmard product. To write \eqref{eq:scalar_sol} more compactly, first define:
\begin{gather*}
    B = A\left(g \odot \mathcal{M}_A\right)A + \alpha A\left(\mathcal{G}\odot \mathcal{M}_A\right)A - C \odot \left(\mathcal{G}\odot \mathcal{M}_A\right)
    \,,
\end{gather*}
then we get
\begin{gather*}
       x = -A_{i,j} + \text{SoftThreshold}(A_{i,j}-t\cdot B_{i,j}, t\cdot C_{i,j})\\
       \Rightarrow D = -A + \text{SoftThreshold}(A-t\cdot B, t\cdot C)
       \,.
\end{gather*}

\bibliographystyle{siamplain}
\bibliography{references}

\end{document}